\documentclass[12pt]{article}
\usepackage{fullpage}
\usepackage{amsmath,amsthm,amssymb,MnSymbol}
\usepackage{mathrsfs}
\usepackage{hyperref}
\usepackage{microtype}
\hypersetup{colorlinks=true, linkcolor=red,citecolor=blue,urlcolor=blue}

\theoremstyle{theorem}
\newtheorem{thm}{Theorem}
\newtheorem{lem}[thm]{Lemma}
\newtheorem{prop}[thm]{Proposition}
\newtheorem{cor}[thm]{Corollary}
\theoremstyle{definition}

\newtheorem{df}[thm]{Definition}
\newtheorem{rem}[thm]{Remark}
\newtheorem{ex}[thm]{Example}

\newcommand{\N}{\mathbb{N}}
\newcommand{\X}{\mathcal{X}}
\newcommand{\D}{\mathcal{D}}
\newcommand{\F}{\mathcal{F}}

\newcommand{\PV}{\mathsf{PV}}
\newcommand{\ZFC}{\mathsf{ZFC}}
\newcommand{\PA}{\mathsf{PA}}
\renewcommand{\P}{\mathsf{P}}
\newcommand{\NP}{\mathsf{NP}}
\newcommand{\coNP}{\mathsf{coNP}}

\newcommand{\im}{\mathrm{im}}
\newcommand{\uf}{ }
\newcommand{\bit}{\textit{bit}}

\renewcommand{\le}{\leqslant}
\renewcommand{\ge}{\geqslant}
\newcommand{\res}{{\upharpoonleft}}
\renewcommand{\ldots}{...}

\begin{document}

\title{\bf Nonstandard combinatorial ultrafilters\\ and a new local witness model}

\author{Johannes Heil\\\small Universit\"at Passau,
  Germany\\[-1ex]\small\url{heil@fim.uni-passau.de} 
 \and Moritz
  M\"uller\\\small Universit\"at Passau,
  Germany\\[-1ex]\small\url{moritz.mueller@uni-passau.de} }

\maketitle

\begin{abstract} A central object in the theory of proof complexity generators is Kraj\'i\v{c}ek's local witness model. We construct a new such model as a definable ultrapower using a definable version of a selective ultrafilter over a  model of arithmetic. We transfer some set theory of combinatorial ultrafilters to the definable nonstandard setting. \end{abstract}

\section{Introduction}
This work concerns the model theory of bounded arithmetics, in particular, the universal variant $\forall\PV$ (going back to \cite{lipton}) of Cook's theory $\PV$ \cite{cook}. Its language, also denoted $\PV$, 
 contains $<$ and symbols for all polynomial time functions. $\forall\PV$ is the universal theory of the standard $\PV$-structure $\N$ (that interprets the symbols straightforwardly).

The contribution of this work consists in the construction of a new so-called local witness model \cite[Chapter 31]{kraforce}. This introduction explains the importance of such models for propositional proof complexity. We refer to the monographs  \cite{krabuch,cn,krared} 
for the connection of bounded arithmetics and propositional proof systems.

\paragraph{Proof complexity generators} 

It is somewhat embarrassing for mathematical logic that it is unknown whether a usual textbook calculus for propositional logic based on finitely many axioms and inference rules has {\em short} proofs of all tautologies, i.e., of size polynomial in the size of the tautology. Establishing 
proof size lower bounds for such {\em Frege} systems and stronger ones is the central problem of proof complexity; \cite{cookreck} is a foundational paper of the subject. 
The question is tightly related to the fact (cf.~\cite{bottom}) that  mathematical logic lacks general methods to prove independence of $\Pi_1$ sentences, even from rather weak theories like Buss' bounded arithmetics \cite{bussthesis}.

The most promising approach to find hard tautologies is via proof complexity generators and goes back to \cite{alekh,krawphp}. Their theory is laid out in the recent monograph~\cite{krapcg}. 

A {\em generator} is a function $g:\N\to\N$ with $|g(x)|>|x|$ for all $x\in\N$; for simplicity we restrict attention to $|g(x)|=|x|{+}1$. Let $b$ have length $n{+}1$. If $g$ is polynomial time or, more generally, has bitgraph in $\NP$, it is straightforward to write a propositional formula $\tau(g)_b$ that expresses of its $n$ variables (plus some auxiliary ones) that they are the bits of a length~$n$ number~$x$ with $g(x)\neq b$. Thus, for $b$  outside the range $\im(g)$ of $g$, we get tautologies $\tau(g)_b$. 

An important example is  a Nisan-Wigderson generator \cite{nw} based on a hard-core bit \cite{gl} of a sufficiently  secure one-way permutation.
Razborov \cite{raz} conjectured  that for this (and related) $g$ 
the tautologies $\tau(g)_b$ are hard for Extended Frege. 
Moving to arbitrary proof systems, Kraj\'i\v{c}ek~\cite{krajml} puts forward the conjecture:
\begin{quote}\em
There exist a generator $g$ with bitgraph in $\NP$ such that $\im(g)$ intersects every infinite $R\in\NP$.
\end{quote}
This implies lower bounds for every proof system, and hence $\NP\neq\coNP$ \cite{cookreck}: note the set $R$ of those  $b$ such that 
$\tau(g)_b$ has a short proof in the system, is in $\NP$ and disjoint from $\im(g)$ by soundness -- so must be finite. 

These conjectures have philosophical importance in that they concern the very nature of proof complexity: Kraj\'i\v{c}ek closes his monograph asking ``Can the proof complexity of some formulas be traced to the computational complexity of associated computational tasks?''~\cite[p.477]{krared}. 
This is also the lead question of Razborov's~\cite{raz}.
In the same vein, Pudl\'ak's conjectures on feasible incompleteness \cite{pubull} ``say that a possible cause of incompleteness is computational complexity.'' \cite[p.563]{pudlakbook}

\paragraph{Local witness models} For the example $g$ above,
Kraj\'i\v cek showed that his conjecture is consistent with $\forall\PV$. More precisely, he constructed the following for 
every infinite $R\in\NP$:

\begin{df} A {\em local witness model for $g$ and $R$} is a model of $\forall\PV+\exists x \text{``}g(x)\in R\text{''}$.\end{df}

The definition of the formula $\text{``}g(x)\in R\text{''}$ is straightforward and spelled out in Section~\ref{sec:lwm}. This consistency of the conjecture can be interpreted as evidence for its truth
(cf.~\cite[p.476]{krared} or \cite[p.326]{krabuch}).
More importantly (see Remark~\ref{rem:coll}), $R\cap\im(g)\neq\emptyset$ follows from the existence of such a model that satisfies  sharply bounded collection  for quantifier-free $\PV$-formulas (likely unprovable in $\forall\PV$ \cite{cookthapen,ondra}). We refer to \cite[Section~5]{krajml} for a discussion, and to \cite{kraneu} for a route side-stepping collection. 

We distinguish two steps in Kraj\'i\v cek's construction. The 1st step is complexity theoretic and derives from a sufficiently secure one-way permutation a less standard hardness hypothesis~(H) -- explained next.
 The proof is ingenuous with further applications in \cite{pich,ps,igor,ondra}. 
 
For  $b\notin\im(g)$ of length $n{+}1$ consider the computational task ($\textup{T}_b$): on input $x$ of length~$n$ find $i\le n$ such that $\bit(g(x),i)\neq \bit(b,i)$, i.e., the $i$th bits of  $g(x)$ and  $b$ differ.
The computational  model are student-teacher computations. These proceed in rounds: in each round, the student computes a candidate solution $i\le n$, and the teacher responds with a counterexample, namely a witness $v$
 for $\bit(g(x),i)= \bit(b,i)$ (an 
$\NP$-property of~$(x,i)$).
If there is no counterexample, the student succeeds.
 If this happens for all inputs~$x$ of length~$n$ 
 and all possible answers of the teacher 
 within $\le c$ rounds, we say the student {\em solves} ($\textup{T}_b$) 
 {\em in $c$ rounds}.
Formally, such a student is a sequence $C_0,\ldots, C_{c-1}$ of Boolean circuits; $C_k$ computes the bits of the candidate solution $i\le |n|$ of round $k$ from the bits of $x$ and the counterexamples $v_0,\ldots, v_{k-1}$ of preceeding rounds; the size of the student  is the sum of the sizes of the $C_k$.
\begin{quote} 
(H) \em For all $c,d\in\N$, all sufficiently large $n$ and all $b\notin \im(g)$ of length $n{+}1$ the following holds. Every student that solves \textup{($\textup{T}_b$)} in $c$ rounds has  size  $\ge n^d$.
\end{quote}

The 2nd step in \cite{krajml} uses a familiar model-theoretic argument based on KPT-witnessing~\cite{kpt} to derive from (H) the existence of a local witness model.
With some extra work in the 1st step,  Kraj\'i\v{c}ek \cite{kraforcenew} constructs, quite explicitly, some Boolean valued local witness model by forcing with random variables~\cite{kraforce} and in \cite[p.118]{krapcg} expresses the hope it might satisfy collection. 
%
Both constructions give local witness models that 
are cofinal extensions of some small canonical model of $\forall\PV$, namely a suitable cut of a nonstandard model~$M$ of true arithmetic. Also a third construction \cite[Section 8.6]{krapcg} for a different type of generator $g$ and a different hardness assumption produces cofinal extensions.

We invest some extra work in the 2nd  step to prove:

\begin{thm}\label{thm:lwm} Assume $R\in\NP$ is infinite and $g$ is a generator with bitgraph in $\NP$ that satisfies the hardness assumption {\em (H)}. Then there exists a  local witness model for $g$ and $R$ that is a proper 1-elementary end extension of $M$.
\end{thm}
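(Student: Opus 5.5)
The plan is to build the model as a definable ultrapower of $M$. Here $M$ is a nonstandard model of true arithmetic, and $n\in M$ is a nonstandard length. Consider the $\PV$-structure whose elements are equivalence classes of $M$-definable functions $f:\{0,1\}^n\to M$, reading $\{0,1\}^n$ in $M$, modulo a definable ultrafilter $U$ on the $M$-definable subsets of $\{0,1\}^n$. The distinguished element $x$ is the class of the identity function. The aim is to choose $U$ so that three things hold. First, \L{}o\'s's theorem holds for quantifier-free $\PV$-formulas, which gives $\forall\PV$. Second, $\text{``}g(x)\in R\text{''}$ holds. Third, $M$ embeds as a proper initial segment that is 1-elementary. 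Once the ultrafilter exists, \L{}o\'s for open formulas is routine, since the functions are closed under composition with $\PV$-symbols. Properness is witnessed by the identity class, or by a non-constant function.

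The key ingredient is a selectivity-type property of $U$, a definable analogue of selective (Ramsey) ultrafilters. We require that every $M$-definable function $f$ with values bounded by an element of $M$ is constant on a set in $U$. We also require that for every unbounded $f$, the set where $f$ exceeds any given $a\in M$ is in $U$. The first property makes the ultrapower an end extension. Any class below a constant $a$ is represented by a function bounded by $a$, so it equals some constant on a $U$-large set. The 1-elementarity should follow the same way: an existential witness with a bounded search in the ultrapower pulls back pointwise in $M$. Definability of sets and functions in $M$ lets us use truth in $M$ to witness $\Sigma_1$ statements. So for $\exists y\,\varphi$ with $\varphi$ open over parameters from $M$, we obtain the witness in $M$ from a representative at a $U$-typical point.

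The construction of $U$ is a definable transfinite, or rather $M$-indexed, recursion over an $M$-enumeration of the definable sets and functions. It is done inside a recursively saturated or otherwise suitable $M$ so that the recursion can be carried out, or done externally along $\omega$ using countability. The invariant is a $U$-positive set $A$ on which the inputs $x\in A$ satisfy $g(x)\in R$, witnessed by the $\NP$ verifier for $R$ applied to $g(x)$ and a definable witness. Throughout the recursion $A$ stays ``large'', meaning not solvable by small students. Hardness (H) is used, via $M\models$ true arithmetic and the choice of $b\notin\im(g)$, to show that the family of sets where $g(x)\in R$ is positive. Equivalently, the failure of $\text{``}g(x)\in R\text{''}$ on all large sets would yield a student solving $(\textup{T}_b)$ in constantly many rounds with polynomial size. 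This follows the KPT-style argument, now with the teacher's counterexamples coming from the definable functions. At each step we must shrink $A$ to decide a new set, or to make a new function constant or unbounded, while keeping largeness. This is a pigeonhole or Ramsey step: for bounded $f$ with range $\le a$, some fiber keeps largeness, costing only a $\log a$ factor in student size.

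The main obstacle is choosing the right notion of ``largeness'' measured by student complexity. It must be preserved under the splitting steps, uniformly over all standard $c,d$, and also imply the existence of the required $\NP$-witnesses so that $g(x)\in R$ holds on the final ultrafilter. I would define $A$ as large if no student of size $n^d$ with $c$ rounds solves $(\textup{T}_b)$ restricted to inputs in $A$, for all standard $c,d$. I would then check that splitting into $\le n^{O(1)}$ pieces keeps one piece large by combining students with a round that guesses the piece. The question is whether unboundedly many pieces, needed for 1-elementarity and end extension, can still be handled.
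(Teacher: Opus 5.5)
\paragraph{Verdict.} The proposal has a genuine gap: the set-up fails at two points, and each failure is fatal on its own.

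\paragraph{The bounded domain $\{0,1\}^n$.} This domain is bounded in $M$. The class of the identity lies below $2^n\in M$, so in an end extension of $M$ it would already belong to $M$. Your own requirement, that every function bounded by an element of $M$ be constant on a set in $U$, applies to the identity. It forces $U$ to be principal, so the ultrapower collapses to $M$. Over a fixed length one only obtains extensions of a cut, as in Kraj\'i\v{c}ek's constructions. The paper instead takes the domain to be all of $M$ with an \emph{unbounded} ultrafilter. The target string depends on the input length: $b_{|x|}$ is the least element of $R$ of length $|x|+1$, and $w_{|x|}$ is its witness.

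\paragraph{Using all definable functions.} The full definable ultrapower satisfies \L{}o\'s for all formulas, because $M$ has definable Skolem functions. Hence it is elementarily equivalent to $M\equiv\N$, and so it satisfies $\neg\exists x\,\text{``}g(x)\in R\text{''}$ in the relevant case $R\cap\im(g)=\emptyset$. Concretely, for $y=b$, the definable map sending $x$ to the least $i$ with $\bit(g(x),i)\neq\bit(b,i)$ is an element $i\le|x|$ with no witness $v$. For the same reason, your invariant (a $U$-positive set of inputs $x$ with $g(x)\in R$) is empty in $M$.

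\paragraph{The missing idea.} You need a \emph{sub}-ultrapower $N=F/U$, where $F$ contains only the functions computed by (possibly nonstandard) students. These are sequences of $\PV$-terms with constants for $b_{|x|},w_{|x|}$, playing against a fixed definable teacher $x\mapsto v_x$.
\begin{itemize}
\item Since $F$ is $(\PV\cup M)$-closed, quantifier-free \L{}o\'s gives $M\preceq_1 N\models\forall\PV$; no selectivity is needed for this.
\item Every $\iota^U\le|\textit{id}^U|$ is computed by some student $s$. Appending one round that outputs the teacher's reply gives $\gamma\in F$, and $\gamma$ witnesses the bit on the $U$-set $X_{\tilde s+1}$.
\end{itemize}

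\paragraph{Where hardness enters.} Hardness is not interleaved with the construction of $U$, and it cannot be. Shrinking sets to make every bounded function constant eventually produces sets, such as singletons, that a hardwired student solves. Instead, (H) is encoded once in $\N$ by the non-increasing family $X_a=\{x\mid b_{|x|}\neq 0\}\cap\bigcap_{s<a}H_s$. Here $H_s$ collects the inputs on which every candidate of $s$ is answered with a witness of equality. The family is unbounded because finitely many students can be composed into one. After transferring it to $M$, its pseudo-intersection extends to an unbounded, uniformly d-selective ultrafilter containing all $X_a$, $a\in M$. Such an ultrafilter is definable and therefore has the pfip. Properness then comes from unboundedness of $U$. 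The end-extension property comes from $N\subseteq\F(M)/U$, which is an end extension of $M$ by the pfip.
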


The proof 
has two interesting corollaries. The first (Corollary~\ref{cor:lwm}) states  the existence of a
 local witness model for $g$ and all infinite $R\in\NP$ simultaneously -- under a slight strengthening of  (H). The second (Corollary~\ref{cor:lwm2}) concerns the question whether
 (H) can be weakened to a uniform variant (uH) concerning uniform polynomial time students.
This does not seem to be the case for  Kraj\'i\v{c}ek's construction (cf.~\cite[p.113]{krapcg}).
Our proof gives the variant of Theorem~\ref{thm:lwm} with (uH) replacing (H) and for 
any $R\in\NP$ that is {\em feasibly infinite}, i.e., $\forall\PV$-provably infinite. This shows 
the consistency under (uH) of the weakening of Kraj\'i\v{c}ek's conjecture to feasibly infinite $R\in\NP$ --  recently  studied in \cite{krabull} (see also~\cite[Sections 8.7, 10.3]{krapcg}).


 The proof of Theorem~\ref{thm:lwm} proceeds similarly to forcing with random variables~\cite{kraforce} but uses the somewhat more 
flexible set-up of definable ultrapowers. The method of ultrapowers has been used in the model theory of $\forall\PV$ in~\cite{pupower,krapower,garlik,byd} and, in fact, is universal in some sense  (see \cite[Theorem~1.4]{byd}).
The core of our  proof is the construction of a certain combinatorial ultrafilter. We elaborate the underlying ultrafilter theory to some  extent beyond  what is required for this proof. 
We now describe our motivation and contribution.

\paragraph{Nonstandard combinatorial ultrafilters}
Forcing with random variables \cite{kraforce} produces Boolean valued models. 
It is a classic \cite[Section~5]{rs}   that such models can be collapsed to usual two-valued
models by factoring the Boolean algebra by a suitable ultrafilter. The current work originates from the hope that the choice of the ultrafilter is a further angle to construct interesting models. This is not investigated in \cite{kraforce}. 

An idea is to use combinatorial ultrafilters studied in set theory~\cite[Chapter 11]{halb}; their impact
on ultrapowers of $\N$ (in the full language) is long known \cite{puritz,blass1}.
 For example, {\em Ramsey} ultrafilters (over $\N$) contain homogeneous sets for all 2-colorings of unordered pairs, and {\em selective} ultrafilters contain for every function a set where the function is constant or injective. A basic, non-trivial result is the equivalence of these two notions (and others). 

A minor obstacle is that such ultrafilters
 may not exist  -- but it is consistent (with~$\ZFC$) to assume they do (see e.g. \cite[Proposition 15.15]{halb}). 
A more serious obstacle is that, instead of~$\N$ we have a nonstandard $M$ and it is not straightforward  to transfer the  set theoretic concepts and results. We refer to \cite{thesis} for a 
thorough discussion.  To give an impression,  the mentioned filters must contain a set of order type the cofinality $\kappa$ of $M$. The mentioned equivalence can be shown for such filters that are  $\kappa$-closed. Then~$\kappa$ is measurable, if uncountable, and this is an assumption of higher consistency strength that we do not know how to avoid. E.g., Kraj\'i\v{c}ek's  $M$ from \cite{kraforce} is $\aleph_1$-saturated, so has uncountable cofinality~$\kappa$.

To avoid esoterical assumptions on $M$, we move to the definable setting:  ultrafilters live in the Boolean algebra   $\D(M)$ of definable sets and we define e.g.~d-Ramsey and d-selectivity with respect to definable 2-colorings, resp., definable~functions. This seems ``right'': Theorem~\ref{thm:ramsey} faithfully transports the proof of the mentioned equivalence -- with some natural extra assumption (namely definability as types). 
Furthermore, already $\ZFC$ proves that these ultrafilters exist (see Theorem~\ref{thm:exist}).

Related (see \cite{blass2}) is  Gaifman's  seminal work \cite{gaif} on types of Peano arithmetic $\PA$; we refer to \cite{kossak} for a survey. 
E.g., \cite[Theorem 3.7]{gaif} shows the existence of types of $\PA$ all of whose extensions to parameters from any model $M$ are d-selective (we blur the distinction between types and ultrafilters in $\D(M)$). Realizing 
such types produces minimal end extensions  \cite[Theorem~3.6]{gaif}),  an argument we use to strengthen Theorem~\ref{thm:lwm} (see Theorem~\ref{thm:stronglwm}). 

\section{Preliminaries}

\subsection{The ground model $M$}

 Let $L$ be a  language containing $\PV$ and $\N$ be an $L$-structure (with universe  $\N$) that interprets  the symbols from $\PV$ canonically.  
 
 Apart 
 $<,+,\cdot$ we shall use the following: the smash function is $n\# m:=2^{|n|\cdot |m|}$ and $|n|$ is the length of the binary representation of $n$, i.e., $n=\sum_{i<|n|}\bit(n,i)\cdot 2^i$  
 where $\bit(n,i)\in\{0,1\}$ for $i<|n|$ is the $i$th bit of $n$
 and we agree $\textit{bit}(n,i)=0$ for $i\ge |n|$. We have $\langle n,m\rangle $ for Cantor's pairing and write $\langle \ell, n,m\rangle:=\langle\langle \ell, n\rangle,m\rangle$ etc. 
Every $n\in\N$ codes 
a length $\textit{lh}(n)$ sequence $((n)_0,\ldots, (n)_{\textit{lh}(n){-}1})$ with
$(n)_i$ denoting components and we agree $(n)_i=0$ for $i\ge\textit{lh}(n)$.

On the formal level where $x=|y|$ we  write $2^x$ for $y\# 1$ and $2^{x^2}$ for $y\# y$ etc.

 \medskip

Throughout, we fix  a (not necessarily proper) elementary extension $M$ of $\N$. 
We view $M$ as an $L(M):=L\cup M$ structure that interprets every 
{\em parameter}
$a\in M$, viewed as a constant, by itself. We write  $s^M$ for the interpretation of $s\in L$ in $M$ but omit the superscript for certain $s\in\PV$ like the above. E.g., for $a,b\in M$ writing $a<b$ means $(a,b)\in\ <^M$.

Let $ \mathcal D(M)$ denote the Boolean algebra of subsets of $M$ that are definable in $M$ (with parameters from~$M$, i.e., as an $L(M)$-structure).  It consists of the sets 
$$\varphi(M):=\{a\in M\mid M\models\varphi(a)\}
$$ 
for $L(M)$-formulas $\varphi(x)$; note, by tuple-coding, $\varphi(M)=\psi(M,a)$ for some $L$-formula $\psi(x,y)$
and a single parameter $a\in M$.

A family $(X_a)_{a\in M}$ 
is {\em uniformly definable} if $\{\langle a,b\rangle\mid b\in X_a\}\in\D(M)$; it is {\em non-increasing} if $X_a\supseteq X_b$ for all $a\le b$. For families $(X_a)_{a\le b}$
with $b\in M$ these modes of speech are similarly explained. 
A set $X\in\D(M)$ is {\em bounded} if 
$$X\subseteq [b]:=\{a\in M\mid   a< b\}
$$ for some $b\in M$. 
A family $\X\subseteq\D(M)$ is {\em unbounded} if all $X\in\X$ are unbounded.

 The following is easy to see
(see~\cite[Lemma~2.17]{byd}):

\begin{lem}\label{lem:uf} A family $\X\subseteq\D(M)$ is contained in an unbounded ultrafilter in $\D(M)$ if and only if $\X$ has the {\em unbounded intersection property}: $\bigcap\X_0$ is unbounded for all finite $\X_0\subseteq\X$.
\end{lem}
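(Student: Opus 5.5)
We prove the two directions separately; the forward one is immediate and the backward one is a Zorn's lemma argument in the Boolean algebra $\D(M)$.

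\emph{Forward direction.} Suppose $\X\subseteq\mathcal U$ for an unbounded ultrafilter $\mathcal U$ in $\D(M)$. For finite $\X_0\subseteq\X$, the set $\bigcap\X_0$ lies in $\D(M)$, since $\D(M)$ is closed under finite intersections. Because $\mathcal U$ is a filter, $\bigcap\X_0\in\mathcal U$. Hence $\bigcap\X_0$ is unbounded, as every member of $\mathcal U$ is.

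\emph{Backward direction.} Assume $\X$ has the unbounded intersection property. Let $\mathcal B:=\{X\in\D(M)\mid X \text{ bounded}\}$; this is an ideal of $\D(M)$.
\begin{itemize}
\item[(i)] $\mathcal B$ is downward closed, since a definable subset of $[b]$ is bounded.
\item[(ii)] $\mathcal B$ is closed under finite unions: if $X\subseteq[b]$ and $Y\subseteq[b']$, then $X\cup Y\subseteq[\max(b,b')]$.
\end{itemize}
The complements $M\setminus B$ for $B\in\mathcal B$ are exactly the definable sets containing a final segment $\{a\mid a\ge b\}$.

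Consider the family $\X':=\X\cup\{M\setminus[b]\mid b\in M\}$. It has the finite intersection property. Indeed, given $X_1,\ldots,X_k\in\X$ and $b_1,\ldots,b_m$, put $b=\max_j b_j$. Then
\[
X_1\cap\cdots\cap X_k\cap (M\setminus[b])
\]
is nonempty, since otherwise $\bigcap_i X_i\subseteq[b]$ would be bounded. (If $k=0$, the intersection is still nonempty because $M\setminus[b]\ni b$.)

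Let $\mathcal F$ be the filter in $\D(M)$ generated by $\X'$. It is proper, because $\X'$ has the finite intersection property. By Zorn's lemma, applied to proper filters of $\D(M)$ containing $\mathcal F$ and ordered by inclusion (unions of chains are proper filters), there is a maximal proper filter $\mathcal U\supseteq\mathcal F$.

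\emph{Maximality gives an ultrafilter.} Suppose $X\notin\mathcal U$ and $M\setminus X\notin\mathcal U$. Then adding either $X$ or $M\setminus X$ to $\mathcal U$ yields an improper filter. So there are $U_1,U_2\in\mathcal U$ with $U_1\cap X=\emptyset$ and $U_2\cap(M\setminus X)=\emptyset$. This gives $U_1\cap U_2=\emptyset$, contradicting properness.

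\emph{Unboundedness of $\mathcal U$.} If some $U\in\mathcal U$ satisfied $U\subseteq[b]$, then $U\cap(M\setminus[b])=\emptyset$ with both sets in $\mathcal U$, a contradiction. So every member of $\mathcal U$ is unbounded, and $\mathcal U$ is the required unbounded ultrafilter containing $\X$.

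The only point needing care is the key step in the backward direction: adding the cofinal sets $M\setminus[b]$ to $\X$ preserves the finite intersection property. This uses that the sets $[b]$ are directed under inclusion, so boundedness of finite unions reduces to a single bound.
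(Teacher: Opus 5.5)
Your proof is correct. The paper gives no argument of its own (it calls the lemma easy to see and points to the literature), and your route is the standard one it has in mind: adjoin the co-bounded sets $M\setminus[b]$ to $\X$, check the finite intersection property, and extend to a maximal proper filter of $\D(M)$. One cosmetic slip: the choice $b=\max_j b_j$ needs $m\ge 1$, so for $m=0$ take $b=0$ (then $M\setminus[0]=M$), or simply note that an unbounded set is nonempty.
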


\subsection{Definable ultrapowers}

Let $\F(M)$ denote
 the set of definable functions $f:M\to M$, i.e.,
$\{\langle a,f(a)\rangle\mid a \in M\}\in\D(M)$.
The restriction of $f$ to $X\in\D(M)$ is denoted  $f\res X$; it is 
{\em bounded-to-one}
if $f^{-1}(b)\cap X =\{a\in X\mid f(a)=b\}$ is bounded for all~$b\in M.$

Let $F\subseteq\F(M)$ be  {\em $L(M)$-closed}:  for every function symbol $f\in L(M)$, say $r$-ary, and all $\bar\alpha=(\alpha_1,\ldots, \alpha_r)\in F^r$ also $f\circ\bar\alpha\in F$; this function  maps
$a\in M$ to $ f^M(\bar\alpha(a))$ where $\bar\alpha(a):=((\alpha_1(a),\ldots,\alpha_r(a))$. For a constant $b\in M$ this means that
$F$ contains  the constant function $\alpha_b$ that maps all $a\in M$ to $b$. 


For  an ultrafilter $\uf U$ in $\D(M)$ we define the $L(M)$-structure $F/\uf U$ as follows. Its    universe  is
$\{\alpha^{\uf U}\mid\alpha\in F\}$ where $\alpha^{\uf U}\subseteq F$ is the set of $\beta\in F$ such that
$\{a\in M\mid \alpha(a)=\beta(a)\}\in \uf U.$
A function symbol $f\in L(M)$ is interpreted in $F/\uf U$ by $\bar\alpha^{\uf U}\mapsto(f\circ\bar\alpha)^{\uf U}$ where
$\bar \alpha^{\uf U}:=(\alpha_1^{\uf U},\ldots,\alpha^{\uf U})$. In particular,
$\alpha^{\uf U}_b\in F/\uf U$ for all $b\in M$.
The relation symbol 
$<\in\PV$ is interpreted in $F/\uf U$ setting for all $\alpha^{\uf U},\beta^{\uf U}\in F/\uf U$,
$$
\alpha^{\uf U}<^{F/\uf U}\beta^{\uf U}\ \Longleftrightarrow\ \{a\in M\mid \alpha(a)< \beta(a)\}\in\uf U.
$$
Similarly for other relation symbols in $ L$. It is easy to check that $F/\uf U$ is well-defined. 

The following is straightforward (see e.g.~\cite[Proposition~2.10]{byd}):

\begin{lem} \label{lem:PV}  For every quantifier-free $L(M)$-formula $\varphi(\bar x)$ and every tuple $\bar\alpha$ from $ F$ we have
$$
F/\uf U\models\varphi(\bar\alpha^{\uf U})\ \Longleftrightarrow\ \{a\in M\mid M\models\varphi(\bar\alpha(a))\}\in\uf U.
$$
In particular, 
 $M\cong M':=\{\alpha_a^{\uf U}\mid a\in M\}\preceq_1F/\uf U$. 
\end{lem}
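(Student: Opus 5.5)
The plan is to prove the displayed equivalence by induction on the structure of the quantifier-free formula $\varphi(\bar x)$, and then read off the ``in particular'' clause.

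\textbf{Terms.} First show by induction on $L(M)$-terms $t(\bar x)$ that
$$t^{F/U}(\bar\alpha^{U})=(t\circ\bar\alpha)^{U},$$
where $t\circ\bar\alpha\in F$ is the function $a\mapsto t^M(\bar\alpha(a))$. This function lies in $F$ because $F$ is $L(M)$-closed and composition is iterated. The base cases are variables, which give $\alpha_i$, and constants $b\in M$, which give $\alpha_b$. The induction step is just the definition of how $F/U$ interprets function symbols, which is well defined.

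\textbf{Atomic formulas.} Take $R(t_1,\ldots,t_k)$ with $R$ a relation symbol, including $<$ and equality. By the term claim, $F/U\models R(\bar t(\bar\alpha^U))$ holds iff
$$\{a\mid M\models R(t_1^M(\bar\alpha(a)),\ldots)\}\in U.$$
For equality this is the definition of $\beta^U=\gamma^U$, and for $<$ and the other relations it is the definition of their interpretation.

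\textbf{Connectives.} Negation uses that $U$ is an ultrafilter in $\D(M)$: a definable set is in $U$ iff its complement is not. The sets involved are definable because $\varphi$ and the $\alpha_i$ are definable. Conjunction uses that $U$ is closed under intersections and upward closed. All of this is routine, and no step is a serious obstacle.

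\textbf{The ``in particular'' clause.} Consider the map $a\mapsto\alpha_a^U$. By the atomic case it is injective: $\{c\mid a=b\}$ is either $M$ or $\emptyset$, and $\emptyset\notin U$. The same argument with constant functions shows that it preserves all function and relation symbols. So it is an isomorphism from $M$ onto the substructure $M'$.

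The only point needing care is $\Sigma_1$-elementarity of $M'$ in $F/U$, i.e.\ $\exists\bar x\,\psi(\bar x,\bar b)$ with $\psi$ quantifier-free.
\begin{itemize}
\item[] \emph{Downward direction.} Suppose $F/U\models\psi(\bar\beta^U,\bar\alpha_b^U)$. By the main claim the set $\{a\mid M\models\psi(\bar\beta(a),\bar b)\}$ lies in $U$, so it is nonempty. Any $a$ in it gives a witness $\bar\beta(a)$ in $M$, hence $M\models\exists\bar x\,\psi(\bar x,\bar b)$.
\item[] \emph{Upward direction.} A witness $\bar c\in M$ yields the witnesses $\bar\alpha_c^U$ in $F/U$, again by the main claim, since the relevant set is all of $M$.
\end{itemize}
Since $M$ is an $L(M)$-structure, the parameters are constants, and this gives $M'\preceq_1 F/U$.
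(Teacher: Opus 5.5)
Your proposal is correct and follows the standard argument the paper intends. The paper gives no proof of its own: it calls the lemma straightforward and cites \cite[Proposition~2.10]{byd}. That argument is the one you give: induction on terms, then atomic formulas, then connectives using that $U$ is an ultrafilter in $\D(M)$. For $M'\preceq_1 F/U$, you then pull existential witnesses back to $M$ through the nonemptiness of sets in $U$.
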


Here, $M\preceq_1 N$ means $N$ is a 1-elementary extension of $M$, i.e., $M\subseteq N$ and $N$
satisfies every universal $L(M)$-sentence that is true in $M$; $M\prec_1 N$ additionally indicates $N\neq M$.


\section{Nonstandard combinatorial ultrafilters}\label{sec:uf}



\subsection{Definable ultrafilters}\label{sec:pfip}

The  nonstandard definable setting requires the  concept of a definable ultrafilter.  It is remarkable that this concept, central in tame model theory, in particular of stable theories (see e.g.~\cite[Section~11.4]{poizat}, \cite[Section~8.3]{tz}),  seems to have originated in the model theory of foundational theories, namely Gaifman's work \cite{gaif} on $\PA$ (cf.~\cite[p.237]{poizat}).

\begin{df}\label{df:dfble} An ultrafilter $\uf U$ in $\D(M)$ is {\em definable} if for every $L(M)$-formula 
$\varphi(x,z)$,
$$
\big\{a\in M\mid \varphi(M, a)\in\uf U\big\}\in\D(M);
$$
\end{df}

Following \cite[p.130]{tz} we write $\textup{d}_Ux\varphi(x,z)$ for an $L(M)$-formula $\psi(z)$ defining this set.

\begin{lem}\label{lem:dfblepfip} Definable ultrafilters in $\D(M)$ have the   {\em pseudo-finite intersection property (pfip)}: for every $b\in M$ and  every uniformly definable  $(X_a)_{a\le b}$ in $\uf U$ we have $\bigcap_{a\le b}X_a\in\uf U$.
\end{lem}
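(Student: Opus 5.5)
Let $\uf U$ be a definable ultrafilter in $\D(M)$, let $b\in M$, and let $(X_a)_{a\le b}$ be uniformly definable with every $X_a\in\uf U$. The plan is to use definability of $\uf U$ to turn the statement ``$\bigcap_{a\le c}X_a\in\uf U$'' into a first-order property of the parameter $c$. We then prove it for all $c\le b$ by induction inside $M$, which is legitimate because $M$ is an elementary extension of $\N$ and so satisfies the least number principle for $L(M)$-formulas.

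First fix an $L$-formula $\chi(x,y,w)$ and a parameter $e\in M$ such that $\{\langle a,x\rangle\mid x\in X_a\}$ is defined using $\chi$ and $e$, so that $X_a=\chi(M,a,e)$. Put
$$
\varphi(x,z):=\forall y{\le} z\ \chi(x,y,e).
$$
Then $\varphi(M,c)=\bigcap_{a\le c}X_a$ for every $c\in M$. By Definition~\ref{df:dfble}, the set $D:=\{c\in M\mid \varphi(M,c)\in\uf U\}$ lies in $\D(M)$, defined by the $L(M)$-formula $\textup{d}_Ux\varphi(x,z)$. It suffices to show $b\in D$.

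Suppose $b\notin D$. The set $\{c\le b\mid c\notin D\}$ is then a nonempty definable subset of $M$. The least number principle holds in $\N$ for all $L$-formulas with parameters; universally quantifying over the parameters, it becomes a first-order scheme, which transfers to $M$ by elementarity. So this set has a least element $c$. Now split into two cases.
\begin{itemize}
\item If $c=0$, then $\varphi(M,0)=X_0\in\uf U$, so $0\in D$, a contradiction.
\item If $c>0$, then $c-1\in D$, so $\bigcap_{a\le c-1}X_a\in\uf U$. Also $\varphi(M,c)=\varphi(M,c-1)\cap X_c$, and $X_c\in\uf U$. An ultrafilter is closed under finite intersections, so $\varphi(M,c)\in\uf U$, again a contradiction.
\end{itemize}
Hence $b\in D$, that is, $\bigcap_{a\le b}X_a\in\uf U$.

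The only subtle point is the induction step. Induction over the nonstandard interval $[0,b]$ is available only for definable properties; definability of $\uf U$ is exactly what makes membership in $\uf U$ a definable property of $c$, and elementarity of $M\succeq\N$ supplies the least number principle for it. The remaining steps are routine. One should also check that $\varphi(M,c-1)\cap X_c=\varphi(M,c)$ holds in $M$, which is immediate since $M$ is discretely ordered with $c-1$ as the predecessor of $c$.
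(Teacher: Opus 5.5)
Your proof is correct and is essentially the paper's argument. Both use definability of $U$ to make ``$\bigcap_{a\le c}X_a\in U$'' an $L(M)$-definable property of $c$, via $\textup{d}_Ux\varphi(x,z)$, and then induct in $M$ using $Y_c=Y_{c-1}\cap X_c$; you phrase the induction as a least counterexample, while the paper steps directly from $c$ to $c{+}1$.
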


\begin{proof} 
Let $\varphi(x,y)$ be an $L(M)$-formula such that $\varphi(x,c)$ defines $Y_c:=\bigcap_{a\le c}X_a$ for all $c\le b$. We have to show $Y_b\in U$, i.e., $M\models\textup{d}_Ux\varphi(x,b)$. This follows by induction in $M$: $Y_0=X_0\in U$, so $M\models \textup{d}_Ux\varphi(x,0)$; if $c<b$ and $M\models\textup{d}_Ux\varphi(x,c)$, then $Y_c\in U$, so $Y_c\cap X_{c+1}=Y_{c+1}\in U$, i.e., $M\models\textup{d}_Ux\varphi(x,c{+}1)$.
\end{proof}

\begin{lem}\label{lem:psfip} An ultrafilter $\uf U$ in $\D(M)$ has the pfip if and only if
for every bounded $f\in \F(M)$ there is an $a\in M$ such that $f^{-1}(a)\in\uf U$.
\end{lem}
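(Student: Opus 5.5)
We prove the two directions separately. The idea is that a bounded definable function $f$ with values below $b$ and the uniformly definable family $(X_a)_{a\le b}$ with $X_a=M\setminus f^{-1}(a)$ carry the same information. Here "$f$ bounded" means its image is bounded, i.e.\ $f(M)\subseteq[b]$ for some $b\in M$.

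\emph{($\Rightarrow$)} Assume $U$ has the pfip and let $f\in\F(M)$ with $f(M)\subseteq[b]$. Suppose for contradiction that $f^{-1}(a)\notin U$ for every $a\in M$. Since $U$ is an ultrafilter in $\D(M)$, each set $X_a:=M\setminus f^{-1}(a)$ lies in $U$. The family $(X_a)_{a\le b}$ is uniformly definable, because its graph is $\{\langle a,c\rangle\mid f(c)\neq a\}$ and $f$ is definable. By the pfip, $\bigcap_{a\le b}X_a\in U$. This intersection is the set of $c$ with $f(c)\notin[b+1]$, which is empty because $f(c)<b$ for all $c$. So $\emptyset\in U$, a contradiction.

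\emph{($\Leftarrow$)} Assume the function condition and let $(X_a)_{a\le b}$ be uniformly definable with all $X_a\in U$. Put $Y:=\bigcap_{a\le b}X_a$, which is definable, and suppose $Y\notin U$. Define $f\in\F(M)$ by letting $f(c)$ be the least $a\le b$ with $c\notin X_a$ if $c\notin Y$, and $f(c):=b+1$ if $c\in Y$. This $f$ is definable, since ``least'' is expressible in the first-order language and $M$, as an elementary extension of $\N$, satisfies the least number principle for definable sets. It is also bounded by $b+2$. By hypothesis there is an $a$ with $f^{-1}(a)\in U$. If $a=b+1$, then $f^{-1}(a)=Y\notin U$, which is impossible. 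Otherwise $a\le b$ and $f^{-1}(a)\subseteq M\setminus X_a$, so $M\setminus X_a\in U$, contradicting $X_a\in U$. Hence $Y\in U$.

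Neither direction has a real obstacle. The only points needing care are the definability of the least-index function, which is justified by elementarity with $\N$ (least number principle), and the convention that ``bounded $f$'' means bounded image, which is what makes the intersection over all $a\le b$ empty in the forward direction.
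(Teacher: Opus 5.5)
Your proof is correct and matches the paper's argument essentially verbatim. The forward direction uses the family $(M\setminus f^{-1}(a))_{a\le b}$, whose intersection is empty. The backward direction uses the same least-index function, which sends $\bigcap_{a\le b}X_a$ to $b{+}1$ (your implicit exclusion of $a>b{+}1$ is justified because $f^{-1}(a)\in U$ is nonempty).
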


\begin{proof}$\Rightarrow:$ assume $f$ is definable with values $\leq b$ and $f^{-1}(a)\notin\uf U$ for all $a\in M$. Then $(M\setminus f^{-1}(a))_{a \leq b}$ is uniformly definable and in $\uf U$ with intersection $\emptyset\notin\uf U$.

$\Leftarrow:$ assume $(X_a)_{a \leq b}$ is uniformly definable and  in $\uf U$ with $X:=\bigcap_{a \leq b}X_a\notin\uf U$. Let $f:M\to M$ map $x\in X$ to $b {+} 1$ and every $x\notin X$ to 
 $\min \{a \leq b\mid x\notin X_a \}$. 
Then $f$ is definable with values $\le b {+} 1$. But $f^{-1}(b + 1)=X\notin\uf U$ and $f^{-1}(a)\notin\uf U$ for every $a \leq b$ because
$f^{-1}(a)\subseteq M\setminus X_a\notin\uf U$. 
\end{proof}

\subsection{d-P-points}\label{sec:Ppoint}

\begin{df}
 For $X,Y\subseteq M$ let $X\subseteq^* Y$ mean that $X\setminus Y$ is  bounded.
A {\em  pseudo-intersection of $\X\subseteq\D(M)$} is an unbounded set
$Y\in\D(M)$ with $Y\subseteq^* X$ for all $X\in\X$.

 An ultrafilter $\uf U$ in $\D(M)$ is a {\em d-P-point} if every uniformly definable family $(X_a)_{a\in M}$ in~$\uf U$ there is a pseudo-intersection in $\uf U$.
\end{df}

\begin{lem}\label{lem:psint} Every  uniformly definable non-increasing unbounded family $(X_a)_{a\in M}$  has a pseudo-intersection in $\D(M)$.\end{lem}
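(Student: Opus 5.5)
We build the pseudo-intersection by a definable diagonalization: let it pick, at each level, a point of the appropriate $X_a$. Since each $X_a$ is unbounded and definable, and the family is uniformly definable, the function
$$
h(a):=\min\{c\in X_a\mid c> a\}
$$
is well defined on all of $M$ (unboundedness of $X_a$ guarantees existence, and the least number principle for definable sets holds in $M$ because $M\equiv\N$). Moreover $h\in\F(M)$, since its graph is defined by an $L(M)$-formula built from the formula defining $\{\langle a,b\rangle\mid b\in X_a\}$. We then set
$$
Y:=\{h(a)\mid a\in M\}=\{c\in M\mid \exists a<c\ (c\in X_a\wedge \forall c'(a<c'<c\to c'\notin X_a))\},
$$
which is in $\D(M)$; bounding the quantifier $\exists a$ by $c$ is legitimate because $h(a)>a$.

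\emph{Unboundedness.} Given $b\in M$, we have $h(b)\in Y$ and $h(b)>b$, so $Y\not\subseteq[b]$.

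\emph{Almost inclusion.} Fix $a_0\in M$; we must show $Y\setminus X_{a_0}$ is bounded. Let $c\in Y$ with $c>h(a_0)$, say $c=h(a)$. If $a\le a_0$, then $X_a\supseteq X_{a_0}$ because the family is non-increasing, so $h(a_0)\in X_a$ and $h(a_0)>a_0\ge a$. By minimality of $h(a)$ this gives $c=h(a)\le h(a_0)$, a contradiction. Hence $a>a_0$, so $c\in X_a\subseteq X_{a_0}$. Therefore $Y\setminus X_{a_0}\subseteq[h(a_0){+}1]$, which is bounded.

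The main point of care is definability. The hypothesis gives a definable family rather than just a set, and $h$ is defined uniformly from that family, so the least number principle applies to it as it does in $\N$. Beyond this, the argument is routine; the non-increasing hypothesis is exactly what the minimality comparison in the almost-inclusion step needs.
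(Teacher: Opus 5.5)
Your proof is correct and is essentially the paper's: both take the image of the definable function sending $a$ to the least element of $X_a$ above $a$ (the paper uses $\min(X_a\setminus[a])$, you use strict $>a$). The only difference is how you bound $Y\setminus X_{a_0}$. You use minimality to show $h(a)\le h(a_0)$ for $a\le a_0$, which gives the bound $h(a_0){+}1$ directly, whereas the paper takes $1{+}\max_{b<a_0}f(b)$ and justifies its existence by $M\equiv\N$.
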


\begin{proof} 
Let $f:M\to M$ map $a\in M$ to $\min (X_a\setminus[a])$. Then $f$ is definable and so is its image~$X$. Clearly, $X$ is unbounded. We claim $X\subseteq^* X_a$ for all $a\in M$. 

Let $a\in M$. If $a\le b$ then $f(b)\in X_b\subseteq X_a$. Thus, if $f(b)\notin X_a$, then $b<  a$. Hence 
$X\setminus X_a\subseteq[c]$ where $c:=1{+}\max_{b<a}f(b)$; note $c$ exists in $M\equiv \N$.
\end{proof}



The following has been shown for $M=\N$ in \cite[Lemma 3]{blass2}. We include the short argument for completeness.

\begin{lem}\label{lem:Ppoint} An ultrafilter $\uf U$ in $\D(M)$ is a d-P-point if and only if  $\uf U$ is unbounded and
for every $f\in \F(M)$ there is $X\in\uf U$ such that $f\res X$ is constant or bounded-to-one.
\end{lem}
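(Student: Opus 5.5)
The plan is to prove both directions directly from the definitions, using Lemma~\ref{lem:psfip} to get the pfip in the backward direction. Throughout I use one fact from $M\equiv\N$: if $g$ is definable and $B\in\D(M)$ is bounded, then so is $\bigcup_{b\le c}B_b$ for a uniformly definable family $(B_b)_{b\le c}$ of bounded sets. The reason is that the least bounds of the $B_b$ form a definable function on $[c{+}1]$, and this function has a maximum.

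\emph{($\Rightarrow$)} First, $\uf U$ is unbounded. Suppose some bounded $B\in\uf U$ exists and apply the d-P-point property to the constant family $X_a:=B$. This yields an unbounded $Y\in\uf U$ with $Y\setminus B$ bounded. Then $Y\subseteq B\cup(Y\setminus B)$ is bounded, a contradiction.

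Now let $f\in\F(M)$. If $f^{-1}(a)\in\uf U$ for some $a$, then $f$ is constant on $X:=f^{-1}(a)$. Otherwise the uniformly definable family $X_a:=M\setminus f^{-1}(a)$ lies in $\uf U$. Take a pseudo-intersection $Y\in\uf U$. Then $f^{-1}(a)\cap Y=Y\setminus X_a$ is bounded for every $a$, so $f\res Y$ is bounded-to-one.

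\emph{($\Leftarrow$)} The first step is to show that $\uf U$ has the pfip, via Lemma~\ref{lem:psfip}. Let $f$ be definable with values $\le b$, and take $X\in\uf U$ with $f\res X$ constant or bounded-to-one. If it were bounded-to-one, then $X=\bigcup_{a\le b}(f^{-1}(a)\cap X)$ would be bounded by the fact above. This contradicts unboundedness of $\uf U$. Hence $f\res X$ is constant with some value $a$, so $f^{-1}(a)\supseteq X$ lies in $\uf U$.

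Next let $(X_a)_{a\in M}$ be uniformly definable in $\uf U$. By the pfip, $Z_a:=\bigcap_{c\le a}X_c\in\uf U$, and $(Z_a)_a$ is uniformly definable and non-increasing with $Z_a\subseteq X_a$. Define
\[
f(x):=\min\bigl(\{a\le x\mid x\notin Z_a\}\cup\{x\}\bigr),
\]
which is definable, and pick $X\in\uf U$ with $f\res X$ constant or bounded-to-one.

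Suppose $f\res X$ is constant with value $c$. For $x\in X\cap Z_c$ with $x>c$ we would have $f(x)\neq c$, because $f(x)=c<x$ forces $x\notin Z_c$. Hence $X\cap Z_c\subseteq[c{+}1]$ is bounded, yet it lies in $\uf U$. This is a contradiction, so $f\res X$ is bounded-to-one.

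Put $Y:=X\cap Z_0\in\uf U$; it is unbounded since $\uf U$ is unbounded. Let $x\in Y\setminus Z_a$ with $x>a$. Then $f(x)\le a$, so $Y\setminus Z_a\subseteq[a{+}1]\cup\bigcup_{b\le a}(f^{-1}(b)\cap X)$. This set is bounded by the fact above. Thus $Y\subseteq^* Z_a\subseteq X_a$ for all $a$, so $Y$ is the desired pseudo-intersection in $\uf U$.

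The main obstacle is the backward direction: the given family need not be non-increasing. This is handled by first extracting the pfip from the constant/bounded-to-one property together with unboundedness, and then choosing the coding function $f$ so that the constant case is impossible.
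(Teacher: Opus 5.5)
Your proof is correct, but in the backward direction you take a detour that the paper avoids.

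\textbf{The paper's route.} It replaces $X_a$ by $Y_a:=X_a\setminus[a]$, so that $x\notin Y_{x+1}$ and a minimum always exists. It lets $f(x)$ be the least $a$ with $x\notin Y_a$ and then argues directly:
\begin{itemize}
\item a constant value $a$ on some $Z\in U$ is impossible, because $f^{-1}(a)\cap Y_a=\emptyset$;
\item if $f\res Z$ is bounded-to-one, then $f$ maps $Z\setminus Y_a$ into $[a{+}1]$, so $Z\setminus Y_a$ is bounded.
\end{itemize}

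\textbf{Your extra step is unnecessary.} Neither of these steps uses monotonicity, so the obstacle you single out is not real. Your own argument goes through verbatim with $X_a$ in place of $Z_a$ and the same capped minimum $f(x)=\min(\{a\le x\mid x\notin X_a\}\cup\{x\})$. Your cap plays the role of the paper's ${\setminus}[a]$ trick. The constant case only uses $X_c\in U$, and the bounded-to-one case only uses that $a\le x$ and $x\notin X_a$ give $f(x)\le a$. So the first step, deriving the pfip via Lemma~\ref{lem:psfip}, can be dropped.

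\textbf{What your route buys.} It yields a byproduct: any unbounded ultrafilter with the constant-or-bounded-to-one property has the pfip. Hence every d-P-point, and in particular every d-selective ultrafilter, has the pfip. Proposition~\ref{prop:ultra}(2) then applies to such ultrafilters directly, without passing through definability as the paper does via Lemmas~\ref{lem:unidef} and~\ref{lem:dfblepfip}.

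\textbf{Forward direction.} You explicitly verify that a d-P-point is unbounded, which the paper leaves implicit. Otherwise your forward direction coincides with the paper's.
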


\begin{proof} $\Rightarrow$: let $f\in\F(M)$ and assume $f^{-1}(b)\notin\uf U$ for all $b\in M$.
Then $(M\setminus f^{-1}(b))_{b\in M}$ is uniformly definable and in $\uf U$. Let $X\in\uf U$ be a pseudo-intersection. But $X\subseteq^*M\setminus f^{-1}(b)$ means that $X\cap f^{-1}(b)$ is bounded.  

$\Leftarrow$: 
let $(X_a)_{a\in M}$ be uniformly definable and in $U$. Then so is $(X_a\setminus[a])_{a\in M}$; write $Y_a:=X_a\setminus[a]$. It suffices to find a pseudo-intersection of $(Y_a)_{a\in M}$ in $U$. 

Let $f\in\F(M)$ map $x\in M$ to the minimal $a\in M$ with $x\notin Y_a$. Choose $Z\in U$ such that $f\res Z$ is bounded-to-one or constant.
If $f\res Z$ was constant, say value with $a$, then $Z\cap Y_a=\emptyset$ contradicting $Z,Y_a\in U$. Hence $f\res Z$ 
is bounded-to-one. Then $Z\subseteq^* Y_a$ for all $a\in M$.
Indeed, $f\res(Z\setminus Y_a)$ has values in $[a{+}1]$ so is bounded-to-one  with bounded image; by $M\equiv\N$, its domain $Z\setminus Y_a$ is bounded.\end{proof}


 
 

\subsection{d-Ramsey ultrafilters}\label{sec:ramsey}

The following is an $M$-analogue of the set theoretical characterization of Ramsey ultrafilters over $\N$, as laid out e.g.~in~\cite[Chapter 11]{halb}.

We write
$\forall x,y\in X\ldots$  for $\forall x\forall y(\varphi(x)\wedge\varphi(y)\to\ldots)$ where $\varphi(x)$ defines $X$.

\begin{thm}\label{thm:ramsey} Let $\uf U$ be an unbounded  ultrafilter in $\D(M)$. Then
$
1\Rightarrow 2\Leftrightarrow 3\Rightarrow 4.
$
Moreover, $4\Rightarrow 1$ holds if $U$ is definable.
\begin{enumerate}\itemsep=0pt
\item $\uf U$ is {\em d-Ramsey}:  for all $L(M)$-formulas $\varphi(x,y)$ there is $X\in\uf U$ such that
$$
M\models \forall x,y \in X(x<y\to \varphi(x,y))\vee \forall x,y\in X( x<y\to \neg\varphi(x,y)).
$$ 

\item $\uf U$ is {\em d-selective}: for all $f\in\F(M)$ there is $X\in \uf U$ such that $f\res X$ is constant or injective.

\item $\uf U$ is a d-P-point and a {\em d-Q-point}: for all  bounded-to-one $f\in \F(M)$ there is $X\in \uf U$ such that $f\res X$ is injective.

\item $\uf U$ is {\em d-diagonalizing}: for every uniformly definable, non-increasing $(X_a)_{a\in M}$ in 
$\uf U$ there is $X\in \uf U$ such that for all $a\in X$ we have $X\setminus[a{+}1]\subseteq X_a$.
\end{enumerate}
\end{thm}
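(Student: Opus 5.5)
We prove the implications in the order $1\Rightarrow2$, $2\Rightarrow3$, $3\Rightarrow2$, $3\Rightarrow4$, and finally $4\Rightarrow1$ under definability. The standard set-theoretic arguments transfer, with $M\equiv\N$ replacing finiteness arguments.

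\emph{$1\Rightarrow2$.} Given $f\in\F(M)$, apply d-Ramsey to $\varphi(x,y):=f(x)=f(y)$ to get $X\in U$. If $\varphi$ holds for all pairs in $X$, then $f\res X$ is constant. Otherwise $f\res X$ is injective.

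\emph{$2\Rightarrow3$.} By Lemma~\ref{lem:Ppoint}, a d-selective unbounded ultrafilter is a d-P-point, since injective implies bounded-to-one. It is also a d-Q-point: if $f$ is bounded-to-one, then $f\res X$ cannot be constant on an unbounded $X\in U$, so $f\res X$ is injective.

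\emph{$3\Rightarrow2$.} Take $f$. By Lemma~\ref{lem:Ppoint} there is $X\in U$ with $f\res X$ constant or bounded-to-one. In the second case, extend $f\res X$ to a bounded-to-one $f'\in\F(M)$, say $f'(x)=f(x)$ on $X$ and $f'(x)=x$ off $X$. The d-Q-point property then gives $Y\in U$ on which $f'$ is injective, and $f\res X\cap Y$ is injective.

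\emph{$3\Rightarrow4$.} Let $(X_a)$ be non-increasing in $U$. Using the d-P-point property, pick a pseudo-intersection $Y\in U$. Let $g(a)$ be the least $c$ such that $Y\setminus[c]\subseteq X_a$; this is definable, and we may make it increasing by taking maxima over $b\le a$, using $M\equiv\N$. Define the definable interval partition $I_0=[g(0)]$, $I_{k+1}=[g^{(k+1)}(0)]\setminus[g^{(k)}(0)]$, iterating inside $M$ via the recursion coded in $M$. Let $h(x)$ be the index $k$ with $x\in I_k$; then $h$ is bounded-to-one.

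By the d-Q-point property, choose $Z\in U$ with $h\res Z$ injective. Then split the intervals into even and odd ones; one of the two unions is in $U$, say $E$. Set $X:=Y\cap Z\cap E$. For $a<a'$ both in $X$, the element $a'$ lies in an interval at least two steps beyond that of $a$, so $a'\ge g(a)$ by monotonicity of $g$ and its iterates. Hence $a'\in Y\setminus[g(a)]\subseteq X_a$. We need $g(x)\ge x$ for this bookkeeping, which we can also arrange. This construction of the partition and the check that the next-but-one interval clears $g(a)$ is the main bookkeeping step.

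\emph{$4\Rightarrow1$ for definable $U$.} This is the main obstacle. Given $\varphi$, for each $a$ let
\[
\epsilon(a)=\begin{cases}1 & \text{if } \{y\mid\varphi(a,y)\}\in U,\\ 0 & \text{otherwise.}\end{cases}
\]
By definability of $U$ (the formula $\textup{d}_Uy\varphi(a,y)$), this is a definable $2$-coloring of points. One color class $C\in U$; say $\epsilon=1$ on $C$. Put
\[
X_a:=C\cap\bigcap_{b\le a,\,b\in C}\{y\mid\varphi(b,y)\}.
\]
This family is uniformly definable and non-increasing, and it lies in $U$ by the pfip of Lemma~\ref{lem:dfblepfip}. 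This is exactly where definability is needed. Apply d-diagonalization to obtain $X\in U$ with $X\setminus[a{+}1]\subseteq X_a$ for $a\in X$. Then $X\cap C\in U$, and for $x<y$ in $X\cap C$ we have $y\in X_x$, hence $\varphi(x,y)$. The case $\epsilon=0$ is symmetric with $\neg\varphi$.
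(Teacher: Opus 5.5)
Your proof is correct and is essentially the paper's argument. It uses the same coloring $f(x)=f(y)$, the same bounded-to-one extension $f'$, and the same iterate-$g$/interval-index/parity trick for (4). For $4\Rightarrow1$ it is the same pfip-plus-diagonalization argument, except that you split off the colour class $C$ before diagonalizing while the paper splits $Y$ afterwards.

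The one real variation is that you obtain (4) from (3) rather than from (2). You take the threshold $g$ from a pseudo-intersection $Y\in U$ and apply the d-Q-point property to the interval index. The paper instead makes $x\mapsto\min\{a\mid x\notin X_a\}$ injective, which forces it to treat the case $\bigcap_aX_a\in U$ separately.

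Two small fixes are needed. Your $I_k$ is off by one: as written, $I_0=I_1$. And you should require $g(x)>x$ rather than $g(x)\ge x$, so that the iterates $g^{(k)}(0)$ strictly increase.
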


\begin{proof} 
$1\Rightarrow 2:$ given $f\in\F(M)$ let $\varphi(x,y)$ be $f(x)=f(y)$ and choose $X$ according to (1). If the 1st disjunct holds, then $f\res X$ is constant. If the 2nd disjunct holds, then $f\res X$ is injective. 

\medskip

$2\Rightarrow 3$: $\uf U$ is clearly a d-P-point by Lemma~\ref{lem:Ppoint}, and it is a d-Q-point because 
a bounded-to-one $f$ cannot be constant on a set in $\uf U$, being unbounded. 

\medskip

$3\Rightarrow 2$: let $f\in\F(M)$ and choose $X\in\uf U$ such that $f\res X$ is constant or bounded-to-one (Lemma~\ref{lem:Ppoint}). In the 1st case we are done, so assume the 2nd. Let $f'$ agree with $f$ on $X$ and be the identity otherwise.
Then $f'\in \F(M)$ is bounded-to-one and the d-Q-point property gives $Y\in \uf U$ such that
$f'\res Y$ is injective. Then so is $f'\res (X\cap Y)=f\res (X\cap Y)$ and $X\cap Y\in\uf U$.

\medskip

$2\Rightarrow 4$:
 let $(X_a)_{a\in M}$ be uniformly definable, non-increasing and in $\uf U$. If $Z:=\bigcap_{a\in M}X_a\in\uf U$, we are done. Assume $Z\notin\uf U$. 
Let $f\in\F(M)$ map every $x\in Z$ to 0 and every  $x\notin Z$ to
$\min \{a\in M\mid x\notin X_a \}$; note $f^{-1}(0)=Z\cup(M\setminus X_0)\notin\uf U$ and for $a>0$,
$$\textstyle
f^{-1}(a)= \bigcap_{b<a}X_b\setminus X_{a}\notin \uf U.
$$
 
Choose $X\in\uf U$ according to (2). Then $X$ is unbounded and $f\res X$ is injective. Given $a\in M$ we have $f(x)>a$ for large enough $x\in X$, say for $x\ge g(a)$ where $g\in \F(M)$. We can assume~$g$ is increasing and $g(0) > 0$ (otherwise replace by $a\mapsto 1{+}\sum_{b\le a}g(b)$). But $f(x)> a$ implies $x\in  X_a$. 
This is the crucial property of $g$: for all $a\in M$,
$$
X\setminus[g(a)]\subseteq X_{a}.
$$ 

Define $h(y)$ by iterating $g$, i.e., $h(0):=0$ and $h(y{+}1):=g(h(y))$. The crucial property  of~$h$ is that for all $a,b\in M$ (using that $g$ is increasing),
$$
a<h(b{+}1)\ \Longrightarrow\ g(a)< g(h(b{+}1))=h(b{+}2).
$$

Define $h^*(x)$ as the $a\in M$ such that $x\in[h(a),h(a{+1}))$. By (2) choose $X^*\in \uf U$ such that $h^*\res X^*$
is injective. Clearly, $h^*\res X^*$ is increasing.

Partition $X^*=X^*_0\cup X^*_1$ where $X_0^*$ and $X^*_1$ collect those $x\in X^*$ with even, resp., odd~$h^*(x)$. Assume $X^*_0\in \uf U$ (the case $X^*_1\in\uf U$ is analogous).

We claim $X_0^*\cap X\in \uf U$ witnesses (4), that is, for all $a\in X_0^*\cap X$,
$$
(X_0^*\cap X)\setminus[a{+}1]\subseteq X_a.
$$
Let $a<x\in X_0^*\cap X $. We have to show $x\in X_a$. 
Since $h^*\res X^*$ is increasing,  $a<x$ implies $b:=h^*(a)<h^*(x)=:c$. As $b,c$ are both  even, 
$b{+}2\le c$. As $h$ is increasing, $h(b{+}2)\le h(c)$. By 
definition of $h^*$ we have $h(c)\le x$ and $a<h(b{+}1)$. By the crucial property of $h$ we get
$g(a)<h(b{+}2)$. Thus, $g(a)< x$ and $x\in X\setminus[g(a)]$. Then $x\in X_a$ by the crucial property of $g$.

\medskip

Moreover:  assuming (4) and that $\uf U$ is unbounded and definable, we verify~(1).
Let $\varphi(x,y)$ be an $L(M)$-formula. For $a\in M$ let $X_a:=\varphi(a,M)$ if this set is in $\uf U$, and $X_a:=\neg\varphi(a,M)$ otherwise. By the definability of $\uf U$, $(X_a)_{a\in M}$ is uniformly definable, and it is clearly in $\uf U$. By the pfip (Lemma~\ref{lem:dfblepfip}), so is $(Y_a)_{a\in M}$ for $Y_a:=\bigcap_{b\le a}X_b$. Further,
$(Y_a)_{a\in M}$ is non-increasing.

Choose $Y\in\uf U$ according to (4), i.e., for all $a\in Y$ 
$$
Y\setminus[a{+}1]\subseteq Y_a\subseteq X_a.$$ 
Then  either $Y\setminus[a{+}1]\subseteq\varphi(a,M)$ or $Y\setminus[a{+}1]\subseteq\neg \varphi(a,M)$. 
Choose $Z\subseteq Y$ with $Z\in \uf U$ such that for all $a\in Z$ the same case happens, say 
$$Y\setminus[a{+}1]\subseteq\varphi(a,M).$$ 
Then $Z$ witnesses (1): let $a,b\in Z$ with $a<b$; then $b\in Y\setminus[a{+}1]$, so $M\models\varphi(a,b)$. 
\end{proof}

We do not know whether the d-Ramsey property implies its version for coloring 3-element sets.
 This can be verified for 
another natural, albeit slightly more technical way to transfer the set theoretic concepts from $\N$ to $M$ that goes back to Blass~\cite{blass2} (he defined (2) below in the case $M=\N$).

\begin{thm} Let $\bar x=(x_0,\ldots, x_{k-1})$ for $k\ge 2$. Let $U$ be an unbounded ultrafilter in $\D(M)$. The following are equivalent.
\begin{enumerate}\itemsep=0pt
\item For every $L(M)$-formula $\varphi(\bar x,y)$ there is an $L(M)$-formula $\psi(x,y)$ such that for all $b\in M$, $X_b:=\psi(M,b)\in U$ is {\em homogeneous} for $\varphi(\bar x,b)$: either all increasing $\bar c\in X_b^k$ satisfy $\varphi(\bar x,b)$ in $M$ or all increasing $\bar c\in X_b^k$ satisfy $\neg\varphi(\bar x,b)$ in $M$. 
\item $U$ is {\em uniformly d-selective}: for all $L(M)$-formulas $\varphi(x,y,z)$ there is 
an $L(M)$-formula $\psi(x,z)$ such that for all $b\in M$ with $\varphi(x,y,b)$ defining a function $f_b:M\to M$ we have $X_b:=\psi(M,b)\in U$ and $f_b\res X_b$ is constant or injective. 
\end{enumerate}
\end{thm}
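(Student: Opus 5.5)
The plan is to prove $(1)\Rightarrow(2)$ directly, and to prove $(2)\Rightarrow(1)$ by first deriving definability of $U$ and a uniform version of d-diagonalization, and then inducting on $k$.

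\emph{$(1)\Rightarrow(2)$.} Given $\varphi(x,y,z)$, I apply (1) to the coloring
$$\theta(\bar x,z):=\exists y\,(\varphi(x_0,y,z)\wedge\varphi(x_1,y,z)),$$
which ignores $x_2,\ldots,x_{k-1}$. This yields $\psi(x,z)$ with $X_b=\psi(M,b)\in U$ homogeneous for $\theta(\bar x,b)$. Suppose $\varphi(x,y,b)$ defines a function $f_b$. Since $U$ is unbounded, $X_b$ is unbounded. So any two elements $c_0<c_1$ of $X_b$ extend to an increasing $k$-tuple in $X_b$ by adding $k-2$ larger elements. If all increasing tuples satisfy $\theta$, then $f_b\res X_b$ is constant; otherwise it is injective.

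\emph{$(2)\Rightarrow(1)$, first step: definability.} Uniform d-selectivity implies that $U$ is definable. Given $\varphi(x,z)$, apply (2) to the $\{0,1\}$-valued function $f_c$ with $f_c(x)=1\Leftrightarrow M\models\varphi(x,c)$. This gives $\psi(x,z)$ with $\psi(M,c)\in U$ on which $f_c$ is constant; it cannot be injective, since $\psi(M,c)$ is unbounded. Hence
$$\varphi(M,c)\in U\iff M\models\exists x\,(\psi(x,c)\wedge\varphi(x,c)),$$
which is a definition of $\textup{d}_Ux\varphi(x,z)$. Consequently $U$ has the pfip by Lemma~\ref{lem:dfblepfip}.

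\emph{Second step: uniform diagonalization.} I redo the proof of $2\Rightarrow 4$ in Theorem~\ref{thm:ramsey} uniformly in a parameter $b$. The construction has the following pieces:
\begin{itemize}
\item the intersection $Z$, and the case split on $Z\in U$, which is definable by the previous step;
\item the function $f$;
\item the injectivity set $X$, given uniformly by (2);
\item the bounding function $g$, its iterate $h$, and $h^*$;
\item the set $X^*$, again given by (2);
\item the even/odd choice, which is definable by the first step.
\end{itemize}
Each of these is definable from $b$ by a fixed formula. So for every uniformly (in $a$ and $b$) definable non-increasing family $(X_{a,b})_a$ in $U$, there is one formula $\chi(x,z)$ such that $\chi(M,b)\in U$ diagonalizes $(X_{a,b})_a$. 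I expect the main bookkeeping obstacle to be exactly here: checking that every choice in that proof is first-order definable from $b$, including the initial case distinction and the parity choice.

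\emph{Third step: induction on $k$.} I prove (1) by induction on $k\ge 1$, where $k=1$ is trivial by definability: take $\psi(M,b)=\varphi(M,b)$ or its complement. For the step, fix $\varphi(x_0,\bar x',z)$ with $\bar x'$ of length $k-1$. By induction with parameters $(a,b)$, there is a formula giving homogeneous sets $Y_{a,b}\in U$ for $\varphi(a,\bar x',b)$, uniformly in $a$ and $b$. Let $c(a,b)\in\{0,1\}$ be the homogeneous color on increasing tuples from $Y_{a,b}$. This is definable because $Y_{a,b}$ is unbounded and hence contains increasing $(k-1)$-tuples.

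Set $Y'_{a,b}:=\bigcap_{a'\le a}Y_{a',b}$. This family is in $U$ by the pfip, non-increasing, uniformly definable, and each $Y'_{a,b}$ is still homogeneous for $\varphi(a,\bar x',b)$ because $Y'_{a,b}\subseteq Y_{a,b}$. The uniform diagonalization gives $D_b\in U$ with $D_b\setminus[a{+}1]\subseteq Y'_{a,b}$ for all $a\in D_b$. By definability, pick the color $i$ with $\{a\in D_b\mid c(a,b)=i\}\in U$, and let $X_b$ be this set.

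Now take an increasing tuple $(a,\bar x')$ from $X_b$. Then $\bar x'$ lies in $D_b\setminus[a{+}1]\subseteq Y'_{a,b}$, so $\varphi(a,\bar x',b)$ holds exactly when $i=1$. Hence $X_b$ is homogeneous, and it is defined by a single formula in $b$, which gives (1).
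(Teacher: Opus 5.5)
Your proposal is correct and follows essentially the same route as the paper. Both arguments get $1\Rightarrow 2$ from a coloring that ignores the extra variables. For the converse, both first derive definability as in Lemma~\ref{lem:unidef}, then run the $2\Rightarrow 4$ diagonalization of Theorem~\ref{thm:ramsey} uniformly in $b$, and finally induct on $k$, splitting the diagonal set by color. Your extra step of replacing $Y_{a,b}$ by $\bigcap_{a'\le a}Y_{a',b}$ via the pfip is harmless; the paper omits it, and the $2\Rightarrow 4$ argument does not actually use monotonicity of the family.
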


\begin{proof} 
$1\Rightarrow 2$ follows as in the previous proof; note (1) for $k\ge 2$ implies (1) for $k=2$. 

Conversely, we assume (2) and prove (1) for all $k>0$ including $k=1$. We use induction on~$k$. The case for $k=1$ follows
from the
 definability of $U$, proved as Lemma~\ref{lem:unidef} below: given $\varphi(x,y)$, set
$$
 \psi(x,y)\ :=\ (\varphi(x,y)\leftrightarrow\textup{d}_Ux\varphi(x,y)).
$$
 
We assume (1)-for-$k$
and show (1)-for-$(k{+}1)$. Given $\varphi(x_0,\ldots,x_k,y)$ and $a,b\in M$, (1)-for-$k$ gives a formula
$\psi(x,y,z)$ such that $X^b_a:=\psi(M,a,b)\in U$ is homogeneous for $\varphi(a,x_1,\ldots, x_k,b)$. 

We claim that  there exists an $L(M)$-formula  $\delta(x,y)$ such that for all $b\in M$ we have $Y_b:=\delta(M,b)\in U$ and for all $a\in Y_b$,
$$
Y_b\setminus [a{+}1]\subseteq X^b_a.
$$ 

The claim implies the theorem: given $a\in Y_b$, either $M\models\varphi(a,\bar c,b)$ for all  increasing $\bar c\in Y^k_b\setminus[a{+}1]$, or  $M\models\neg\varphi(a,\bar c,b)$  for all  increasing $\bar c\in Y^k_b\setminus[a{+}1]$. Hence, $Y_b$ splits into $Y^0_b\cup Y^1_b$ according to these two cases. Both  $Y^0_b$ and $Y^1_b$ are homogeneous for $\varphi(x_0,\ldots,x_k,y)$. Using definability of $U$ we get a formula $\psi(x,y)$ such that $\psi(M,b)\in U$ equals $Y^0_b$ or $Y^1_b$.

To prove the claim, follow the proof of $2\Rightarrow 4$ in the previous theorem with $(X^b_a)_{a\in M}$ in place of $(X_a)_{a\in M}$ there. The sets $X^*_0,X^*_1$ there equal $\delta_0(M,b),\delta_1(M,b)$ for suitable formulas $\delta_0(x,y),\delta_1(x,y).$ Using again the definability of $U$ gives a formula
$\delta(x,y)$ as desired.
\end{proof}

\begin{lem}\label{lem:unidef} Unbounded, uniformly d-selective ultrafilters in $\D(M)$
are definable.
\end{lem}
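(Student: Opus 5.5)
The plan is to apply uniform d-selectivity to the family of characteristic functions of the sets $\varphi(M,b)$. These functions take only two values, so on a set in $U$ they cannot be injective and must therefore be constant. The formula produced by uniform d-selectivity then lets us read off, first-order and uniformly in $b$, whether $\varphi(M,b)\in U$.

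Concretely, fix an $L(M)$-formula $\varphi(x,z)$; we must show $\{b\in M\mid \varphi(M,b)\in U\}\in\D(M)$. Let
$$
\chi(x,y,z)\ :=\ (y=1\wedge\varphi(x,z))\vee(y=0\wedge\neg\varphi(x,z)).
$$
For every $b\in M$, $\chi(x,y,b)$ defines a function $f_b:M\to M$, namely the characteristic function of $\varphi(M,b)$. By uniform d-selectivity (item (2) of the preceding theorem) there is an $L(M)$-formula $\psi(x,z)$ such that for all $b\in M$ we have $X_b:=\psi(M,b)\in U$, and $f_b\res X_b$ is constant or injective.

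Next we rule out injectivity. Since $U$ is unbounded, $X_b$ is unbounded and in particular contains at least three elements. As $f_b$ takes values in $\{0,1\}$, the restriction $f_b\res X_b$ cannot be injective, so it is constant. Hence either $X_b\subseteq\varphi(M,b)$ or $X_b\subseteq M\setminus\varphi(M,b)$. In the first case $\varphi(M,b)\in U$, being a superset of $X_b\in U$. In the second case $M\setminus\varphi(M,b)\in U$, so $\varphi(M,b)\notin U$. Since $X_b\neq\emptyset$, the two cases are mutually exclusive. We therefore obtain, for all $b\in M$,
$$
\varphi(M,b)\in U\ \Longleftrightarrow\ M\models\forall x\,(\psi(x,b)\to\varphi(x,b)).
$$
Thus $\textup{d}_Ux\varphi(x,z)$ can be taken to be $\forall x(\psi(x,z)\to\varphi(x,z))$, which shows that $U$ is definable.

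There is no real obstacle in this argument. The only points needing care are to write the characteristic function as a single formula $\chi(x,y,z)$ that defines a total function for every parameter $b$, so that item (2) applies uniformly, and to invoke unboundedness of $U$ to exclude the injective alternative.
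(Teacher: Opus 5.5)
Your proof is correct and follows the paper's argument: apply uniform d-selectivity to the characteristic functions of $\varphi(M,b)$, use unboundedness of $X_b$ to exclude injectivity, and read off membership in $U$ from the constant value. The only difference is cosmetic. The paper uses $\exists x(\psi(x,z)\wedge\varphi(x,z))$ as the defining formula, while you use $\forall x(\psi(x,z)\to\varphi(x,z))$; these are equivalent because $X_b\neq\emptyset$ and $f_b$ is constant on $X_b$.
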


\begin{proof} Assume $U$ is unbounded and uniformly d-selective. Given $\varphi(x,z)$ we have to find a formula $\chi(z)$
 that defines $\{a\in M\mid \varphi(M,a)\in U\}$.  Choose
$\varphi'(x,y,z)
$ 
such that 
for every $a\in M$, $\varphi'(x,y,a)$ defines  the characteristic function $f_a$ of $\varphi(M,a)$.  Applying uniform d-selectivity on $\varphi'$ gives  $\psi(x,y)$ such that $X_a:=\psi(M,a)\in U$ and $f_a\res X_a$ is constant or injective.
Since $X_a$ is unbounded and $f_a$ has values in $\{0,1\}$, 
$f_a\res X_a$  is constant. If its value is 1 resp.~0, then  $\varphi(M,a)\in\uf U$ resp.~$\neg\varphi(M,a)\in \uf U$. Thus we can set $\chi(z):=\exists x(\psi(x,z)\wedge\varphi(x,z)) $.
\end{proof}

\subsection{Existence}\label{sec:exist}

As mentioned in the introduction,
Gaifman showed the existence of types of $\PA$ all of whose extensions to any model $M$  are unbounded and d-selective; same for definable types (see \cite[\S 2]{gaif2} or \cite[Theorems 2.10 and 3.6]{gaif}). His constructions are based on general results on how to get uniform definitions for certain recursively defined sequences of sets \cite[$\S 1$]{gaif2}. Poizat \cite[Theorem 11.9]{poizat} constructs a definable ultrafilter in $\D(M)$ (fixed $M$) based on partial truth definitions. 
We give a self-contained construction of stronger ultrafilters:

\begin{thm}\label{thm:exist} Assume $L$ is countable. Then every unbounded $X\in\D(M)$ is contained in some unbounded,  uniformly d-selective ultrafilter in $\D(M)$.
\end{thm}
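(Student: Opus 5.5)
The plan is to build, by a countable recursion outside $M$, a sequence of $L$-formulas $\theta_n(x,w)$ describing ever finer definable families. Each family $T^n_w:=\theta_n(M,w)$ will be unbounded, non-increasing in $w$, and contained in the previous family. Since $L$ is countable, there are only countably many $L$-formulas $\varphi(x,y,z)$. Every $L(M)$-formula can be written as $\varphi(x,y,z,c)$ for one parameter $c$, and $c$ can be absorbed into $z$ by tuple coding. Replacing $z$ by $\langle z,w\rangle$ then handles $\varphi(x,y,z,c)$ uniformly in all $c$. So it suffices to fix an enumeration $(\varphi_n(x,y,z))_{n\in\N}$ of $L$-formulas and to get, for each $n$, a formula $\psi_n(x,z)$ as in uniform d-selectivity. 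The ultrafilter $U$ will be any unbounded ultrafilter extending $\{T^n_w\mid n\in\N,w\in M\}$, obtained via Lemma~\ref{lem:uf}.

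\textbf{Key steps.} Let the given $X$ be $\chi(M,c)$. Start with $\theta_0(x,w):=\chi(x,c)$, a constant family which is unbounded. At stage $n$, given $\theta_n$, define $T^{n+1}_w$ inside $M$ by recursion on $w$: let $T^{n+1}_{-1}:=T^n_0$ (with the convention that $T^{n+1}_0$ is built from this), and obtain $T^{n+1}_{w}$ from $T^{n+1}_{w-1}\cap T^n_w$ as follows. If $\varphi_n(x,y,w)$ defines a function $f_w$, pass to an unbounded definable subset on which $f_w$ is constant or injective; otherwise leave the set unchanged. The refinement is canonical. If some fibre $f_w^{-1}(b)$ meets the current set $Y$ unboundedly, take the least such $b$ and use $Y\cap f_w^{-1}(b)$. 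Otherwise every fibre is bounded, and we take the greedy subset $\{y\in Y\mid f_w(y)\neq f_w(y')\text{ for all }y'\in Y,\ y'<y\}$. This greedy set is injective for $f_w$ and unbounded, because bounded fibres together with an unbounded $Y$ force unboundedly many new values. Finally set $\psi_n(x,z):=\theta_{n+1}(x,z)$. The set $T^{n+1}_b$ lies inside the set refined at stage $b$, so $f_b\res T^{n+1}_b$ is constant or injective.

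\textbf{Main obstacle.} The point requiring care is that all of this is expressible by a single $L(M)$-formula $\theta_{n+1}(x,w)$, and that it provably has the claimed properties in $M$ and not just in $\N$. The recursion on $w$ can be written as a first-order definition via sequence coding, since the stages are definable sets indexed by $w$; I expect this to need some care. To transfer, I would quantify out the parameter $c$ and state, as an $L$-sentence, the following: for all $c$, if $\chi(M,c)$ is unbounded then for every $w$ the set $T^{n+1}_w$ is unbounded, $T^{n+1}_w\subseteq T^{n+1}_{w'}\cap T^n_w$ for $w'\le w$, and for each $b\le w$ with $f_b$ a function, $f_b$ is constant or injective on $T^{n+1}_w$. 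This sentence holds in $\N$ by the argument above, which uses only ordinary induction and least-element principles, so it holds in $M\succeq\N$ as well.

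\textbf{Conclusion.} Given finitely many sets $T^{n_i}_{w_i}$, all of them contain $T^{m}_{w}$ for $m=\max n_i$ and $w=\max w_i$, by monotonicity in both indices. That set is unbounded, so Lemma~\ref{lem:uf} yields an unbounded ultrafilter $U\ni X$ containing every $T^n_w$. This $U$ is uniformly d-selective, witnessed by the $\psi_n$.
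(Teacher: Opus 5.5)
\textbf{The recursion can produce empty sets.} Your construction breaks where you refine $Y=T^{n+1}_{w-1}\cap T^n_w$: nothing makes this intersection unbounded. Both sets are unbounded subsets of $T^n_{w-1}$. However, they come from unrelated refinements, one made for $\varphi_n$ at index $w-1$ and the other for $\varphi_{n-1}$ at index $w$, and they can be disjoint.

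For instance, take $X=M$. Let $\varphi_0(x,y,w)$ define $x\mapsto\bit(x,0)$ for $w=1$ and the constant $0$ for $w\neq 1$. Let $\varphi_1(x,y,w)$ define $x\mapsto 1-\bit(x,0)$ for $w=0$ and the constant $0$ for $w\neq 0$. Your rule (take the least unbounded fibre) gives $T^1_0=M$ and $T^1_w=$ the even numbers for $w\ge 1$, but $T^2_0=$ the odd numbers. Hence $T^2_1\subseteq T^2_0\cap T^1_1=\emptyset$. So the sentence you want to transfer is already false in $\N$: exact monotonicity in both indices is too much to ask for.

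The missing idea is Lemma~\ref{lem:psint}. After building the family for one formula, pass to a pseudo-intersection $Z_n$ of $(T^n_w)_{w\in M}$ and run the next recursion inside $Z_n$ alone, without intersecting with $T^n_w$. Then $T^{n+1}_w\subseteq Z_n\subseteq^* T^n_{w'}$ for all $w,w'$. So the intersection of finitely many $T^{n_i}_{w_i}$ contains $T^m_w$ minus a bounded set, where $m=\max_i n_i$ and $w=\max_i w_i$, and Lemma~\ref{lem:uf} applies. This is exactly the paper's chain $X\supseteq^* Z_0\supseteq^* Z_1\supseteq^*\cdots$.

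\textbf{The definability step is left open.} The point you single out as the main obstacle is the core of the paper's proof, and ``sequence coding of the stages'' does not settle it. The stages are unbounded sets, not elements of $M$, so they cannot be coded. Moreover, a recursion along $w$ in which each set is first-order defined from the previous one need not be definable at all; already in $\N$ the quantifier complexity may grow with $w$.

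The paper codes only the decision data: a sequence $s$ whose $a$th entry records `not a function', `bounded-to-one: take the greedy minima', or `least unbounded fibre $y$'. Membership of $x$ in the $a$th set is then computed by course-of-value recursion from the membership of the $z\le x$ in the $(a{-}1)$th set. Finally $X_a:=X^{g(a+1)}_a$, where $g(\ell)$ is the unique correct $s$ of length $\ell$, singled out by a first-order condition. Your two refinements have exactly this bounded dependence, so this step can be carried out as in the paper. In your proposal, however, it remains unproved.
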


\begin{proof} Let  $\varphi(x,y,z)$ be an $L$-formula (with at most the variables shown and without parameters from~$M$). The first and main step is to construct a uniformly definable non-increasing family $(X_a)_{a\in M}$ such that $X_0\subseteq X$ and $X_a$ witnesses the d-selectivity property for the function defined by $\varphi(x,y,a)$ (if it does define a function). We first define $(X^s_a)_{a<\ell}$ using a length $\ell:=\textit{lh}(s)$ sequence $s\in M$ as auxiliary information - suitable choices of $s$ will determine the desired sequence. To explain the idea,
think of $(X^s_a)_{a<\ell}$ as a binary $\ell\times M$-matrix whose lines are the characteristic sequences of $X^s_0$, $X^s_1$,\ldots; so $x\in X_a$ means the $\langle a,x\rangle$th entry is 1 (for readability we abuse notation and use $x,y$ also for elements of $M$). This entry is going to be determined by the other entries in the upper left $[a{+}1]\times [x{+}1]$-submatrix. We aim to compute it by a function $F:M^2\to \{0,1\}$ obtained by course-of-value recursion. 

More precisely, the $\langle a,x\rangle$th entry is $F(\langle a,x\rangle,s)$ and is  computed from the values  \linebreak $F(\langle a{-}1,0\rangle,s), \ldots ,F(\langle a{-}1,x\rangle,s)$  as follows:
\begin{enumerate}\itemsep=0pt
\item[($F$a)] if $(s)_a=0$, then $F(\langle a,x\rangle,s)=F(\langle a{-}1,x\rangle,s)$;
\item[($F$b)] if $(s)_a=1$, then $F(\langle a,x\rangle,s)=1$ if and only if $F(\langle a{-}1,x\rangle,s)=1$ and
there exists $y\in M$ such that $M\models\varphi(x,y,a)$ and for all $z<x$ we have $M\models\neg\varphi(z,y,a)$ or $ F(\langle a{-}1, z\rangle, s)=0$.
\item[($F$c)] if $(s)_{a}=y{+}2$ for some $b\in M$, then $F(\langle a,x\rangle,s)=1$ if and only if $F(\langle a{-}1,x\rangle,s)=1$ and $M\models\varphi(x,y,a)$;
\end{enumerate}
 in case $a=0$ we agree that $F(\langle -1,x\rangle,s)=1$ if and only if $x\in X$. Clearly, $F$ is definable and we let $X^s_a:=\chi(M,a,s)$ for  the formula 
 $$
 \chi(x,y,z)\ :=\ F(\langle y,x\rangle,z)=1.$$ 
 
 \noindent{\em Claim 0:} $(X^s_a)_{a<\ell}$ is non-inreasing and $X^s_0\subseteq X$; further, $X^s_a=X^{t}_a$ for every sequence $t\in M$ extending $s$.
\medskip
 
 \noindent{\em Proof of Claim 0:}  
Let $a,s\in M$ with $a<\ell$. By ($F$a)-($F$c) we have that
 $F(\langle a,x\rangle,s)=1$ implies $F(\langle a{-}1,x\rangle,s)=1$, i.e., $X^s_a\subseteq X^s_{a{-}1}$; by induction in $M$, $(X^s_a)_{a<\textit{lh}(s)}$ is non-increasing. 
 
$X^s_0\subseteq X$ follows from our notational convention for $F(\langle -1,x\rangle,s)$.

Further: let $(s)_{\le a}\in M$ code the sequence $((s)_0,\ldots,(s)_a)$ in $M$. It suffices to show $F(\langle a,x\rangle,s)=F(\langle a,x\rangle,(s)_{\le a})$ for all $x,a<\textit{lh}(s)$. This is clear by induction on $a$ in $M$ because $F(\langle a,x\rangle,s)$ is computed from $(s)_a$ and earlier values of $F$.
 \hfill$\dashv$\medskip 

We now fix the auxiliary information $s\in M$. For every $\ell\in M$ there exists a unique sequence $s\in M$ of length $\ell$ such that for all $a<\ell$,
\begin{enumerate}\itemsep=0pt
\item[($s$a)] $(s)_a=0$ if $\varphi(x,y,a)$ does not define a function in $M$;
\item[($s$b)] $(s)_a=1$ if $\varphi(x,y,a)$ defines a function $f_a:M\to M$ and 
$X^s_{a{-}1}\cap f^{-1}_a(y)$ is bounded for all $y\in M$ (as before, we understand $X^s_{-1}:=X$);
\item[($s$c)] $(s)_a=y{+}2$ if $\varphi(x,y,a)$ defines a function $f_a:M\to M$ and $y\in M$ is minimal such that
$X^s_{a{-}1}\cap f^{-1}_a(y)$ is unbounded.
\end{enumerate}
Let $g\in\F(M)$ map $\ell$ to such an $s$ and define  $X_a:=X^{g(a{+}1)}_a=\chi(M,a,g(a{+}1))$.

 \medskip
 
 \noindent{\em Claim 1:} $(X_a)_{a\in M}$ is non-increasing and unbounded; further,
 $X_0\subseteq X$. 
\medskip
 
 \noindent{\em Proof of Claim 1:}  Using Claim 0, we have $X_0=X^{g(1)}_0\subseteq X$ and, since $g(a{+1})$ extends $g(a)$,
$$
X_a=X^{g(a{+}1)}_a\subseteq X^{g(a{+}1)}_{a{-}1}=X^{g(a)}_{a{-}1}=X_{a{-}1}.
$$ 
 
  We are left to show unboundedness.
 By induction in $M$, it suffices to show that $X_a$ is unbounded assuming $X_{a{-}1}$ is. Let 
$s:=g(a{+}1)$. Then $X_a=X^s_a$ by definition and $X_{a{-}1}=X^s_{a{-}1}$ by Claim 0 ($s$ extends $g(a)$).

If $(s)_a=0$, then $X^s_a=X^s_{a-1}$ is unbounded (by ($F$a)). If $(s)_a=y{+}2$ for some $y\in M$, 
then $X^s_a=X^s_{a-1}\cap f^{-1}(y)$ is unbounded (by ($Fc$) and ($s$c)). 

If $(s)_a = 1$, then $\varphi(x, y, a)$ defines a function $f_a$ such that $f_a\res X^s_{a - 1}$ is bounded-to-one (by~($s$b)). By ($F$b), $X^s_a$ is the set of minima of those sets $X^s_{a - 1} \cap f^{-1}(y)$ which are non-empty. This set is unbounded: 
otherwise $X^s_a\subseteq[b]$ for some $b\in M$; then 
$f\res X^s_{a-1}$ takes values $\le c$ with $c:=\max_{x<b}f(x)\in M\equiv \N$; thus $f\res X^s_{a-1}$ is bounded-to-one with bounded image, so its domain $X^s_{a-1}$ is  bounded - contradicting the induction hypothesis.
\hfill$\dashv$\medskip 
 
 \noindent{\em Claim 2:} If $a\in M$ is such that $\varphi(x,y,a)$ defines a function $f_a$, then
 $f_a\res X_a$ is constant or injective.
 
\medskip
 
 \noindent{\em Proof of Claim 2:} for $s:=g(a{+}1)$ we have $(s)_a\neq 0$. If $(s)_a=y{+}2$ for some $y\in M$, then $X_a=X^s_a=X_{a-1}\cap f_a^{-1}(y)$ (as noted above), so $f_a\res X_a$ is constantly $y$.

If $(s)_a=1$, then  $X_a^s$ is the set of minima of those sets $X^s_{a - 1} \cap f^{-1}(y)$ which are non-empty (as noted above). This means $f_a\res X_a$ is injective.
  \hfill$\dashv$\medskip

  By Claim 1 and Lemma~\ref{lem:psint}, $(X_a)_{a\in M}$ has a pseudo-intersection $Z_0\in\D(M)$. Any unbounded ultrafilter that contains $Z_0$ contains all $X_a,a\in M$. By
 Claim 2, then  $\chi(x,z,g(z{+}1))$ witnesses uniform d-selectivity for $\varphi(x,y,z)$.

Repeat the construction for the next $L$-formula $\varphi'(x,y,z)$ and $Z_0$ in place of $X$ and get an unbounded $Z_1\subseteq^* Z_0$. Going through all formulas gives 
$$
X\supseteq^* Z_0\supseteq^* Z_1\supseteq^* \ldots
$$ 
in $\D(M)$, all unbounded (here we use that $L$ is countable).
By Lemma~\ref{lem:uf}, there exists an unbounded ultrafilter $\uf U$ containing $X$ and all $Z_n,n\in\N$.
Then $\uf U$ is uniformly d-selective.
\end{proof}

 \subsection{Combinatorial ultrapowers}

As mentioned in the introduction, it is long known that combinatorial ultrafilter properties relate to properties of the corresponding definable ultrapower (cf.~\cite{blass2,blass1}). The following collects what we are going to use and is essentially known.
 It  is proved by standard arguments from the model theory of $\PA$.

\begin{prop} \label{prop:ultra} Let $U$ be an 
ultrafilter in $\D(M)$. Then $M^*:=\F(M)/U$
 is an elementary extension of $M':=\{\alpha^U_a\mid a\in M\}\cong M$. Further,
\begin{enumerate}\itemsep=0pt
\item If $U$ is unbounded, then $M'\preceq M^*$ is not cofinal.
 \item If $U$ has the pfip, then $M'\preceq M^*$ is an end extension.
 \item If $U$ is definable, then $M'\preceq M^*$ is {\em conservative}: $X\cap M'\in\D(M')$ for all $X\in\D(M^*)$.
 \item If $U$ is d-selective, then $M'\preceq M^*$ is {\em minimal}:  $N=M'$ or $N=M^*$
 for all $M'\preceq N\preceq M^*$. 
\end{enumerate}
\end{prop}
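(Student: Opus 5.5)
The plan is to first establish elementarity via Łoś's theorem for the full definable ultrapower, and then derive items 1--4 one at a time, each from the corresponding ultrafilter property.

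\emph{Elementarity.} Because $\F(M)$ is closed under all definable operations, we can prove by induction on $L(M)$-formulas $\varphi(\bar x)$ that
$$M^*\models\varphi(\bar\alpha^U)\iff\{a\in M\mid M\models\varphi(\bar\alpha(a))\}\in U.$$
The base case is Lemma~\ref{lem:PV}, and the Boolean steps use that $U$ is an ultrafilter. For the existential step $\exists y\,\varphi(\bar x,y)$, we use definable Skolem functions: since $M\equiv\N$, we can let $\beta(a)$ be the least $y$ with $M\models\varphi(\bar\alpha(a),y)$, or $0$ if there is none. Then $\beta\in\F(M)$, and it witnesses the right-to-left direction. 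Applying this with constant functions $\alpha_a$ gives $M'\preceq M^*$.

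\emph{Item 1.} The identity function $\mathrm{id}\in\F(M)$ satisfies $\{a\mid a>b\}\in U$ for every $b$, because $U$ is unbounded and so contains no set $[b{+}1]$. Hence $\mathrm{id}^U>\alpha_b^U$ for all $b$, so $M'$ is not cofinal in $M^*$.

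\emph{Item 2.} Suppose $\beta^U<\alpha_b^U$. Modify $\beta$ off a set in $U$ so that it is bounded by $b$. By Lemma~\ref{lem:psfip} there is $a$ with $\beta^{-1}(a)\in U$, so $\beta^U=\alpha_a^U\in M'$. Thus $M^*$ is an end extension of $M'$.

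\emph{Item 3.} Let $X=\varphi(M^*,\bar\gamma^U)$ with parameters $\bar\gamma\in\F(M)$. By tuple coding we may take a single parameter $\gamma$. Then
$$\alpha_a^U\in X\iff\{c\mid M\models\varphi(a,\gamma(c))\}\in U.$$
This condition is $\textup{d}_Ux\,\psi(x,a)$ for the formula $\psi(x,z):=\varphi(z,\gamma(x))$, since $\gamma$ is definable. Definability of $U$ therefore makes $X\cap M'$ definable in $M'$.

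\emph{Item 4.} This is the step I expect to be the main obstacle. Let $M'\preceq N\preceq M^*$ with $N\ne M'$, and pick $\beta^U\in N\setminus M'$. By d-selectivity there is $X\in U$ with $\beta\res X$ constant or injective. If it were constant, then $\beta^U\in M'$, so $\beta\res X$ is injective. Define $h\in\F(M)$ by letting $h(y)$ be the unique $x\in X$ with $\beta(x)=y$ when such an $x$ exists, and $0$ otherwise. Then $h\circ\beta=\mathrm{id}$ on $X$. Since $h$ is definable in $M$ with parameters, $h$ is defined by an $L(M)$-formula. Because $N\preceq M^*$ and $N$ contains the parameters of $M'$, $N$ is closed under the interpretation of $h$ in $M^*$; by Łoś that interpretation sends $\beta^U$ to $(h\circ\beta)^U=\mathrm{id}^U$, so $\mathrm{id}^U\in N$. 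Finally, every $\alpha^U\in M^*$ equals $\alpha(\mathrm{id}^U)$, computed with the $L(M)$-definition of $\alpha$ inside $M^*$. Hence $M^*$ is the definable closure of $M'\cup\{\mathrm{id}^U\}$, and elementarity gives $N=M^*$.

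The care needed in this step is in checking that the interpretation in $M^*$ of an $L(M)$-definable function corresponds to composition in $\F(M)$. This follows from the Łoś equivalence applied to the formula defining the graph of the function.
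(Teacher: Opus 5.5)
Your proposal is correct and follows essentially the same route as the paper. You use Łoś's theorem for the full definable ultrapower (which the paper calls well known and you prove via least-witness Skolem functions), the identity function for item 1, Lemma~\ref{lem:psfip} for item 2, the formula $\textup{d}_U y\,\varphi(a,\gamma(y))$ for item 3, and a definable left inverse of $\beta$ on a set in $U$ for item 4; the only cosmetic difference is that in item 4 you first bring $\mathrm{id}^U$ into $N$ and then recover each $\alpha^U$ as $\alpha(\mathrm{id}^U)$, whereas the paper writes $\beta^U=(\beta\circ g)(\alpha^U)$ directly.
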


\begin{proof} 
 $M'\preceq M^*$ is well-known; in fact,  for $F:=\F(M)$ the equivalence of Lemma~\ref{lem:PV} holds  for all  formulas $\varphi(\bar x)$ (not necessarily quantifier-free). 

(1): for the identity $\textit{id}$ on $M$ we have $M^*\models \alpha_a^U<\textit{id}^U$ for all $a\in M$ because $M\setminus [a{+}1]=\{x\in M\mid \alpha_a(x)< \textit{id}(x)\}\in U$.

(2): let $\alpha \in\F(M)$ and assume 
$\alpha^U<\alpha_a^U$ in $M^*$. Then $X:=\{b\in M\mid \alpha(b)< a\}\in U$. Let $f\in \F(M)$ to map $x\in X$ to $\alpha(x)$ and $x\notin X$ to $a{+}1$. Then $f$ takes values  $\le a{+}1$. By Lemma~\ref{lem:psfip}, there is $c\le a{+}1$ with $f^{-1}(c)\in U$. Since $f^{-1}(a{+}1)=M\setminus X\notin U$, we have $c\le a$. Then $f^{-1}(c)=\alpha^{-1}(c)$ and
$\{b\in M\mid\alpha(b)=c \}\in U$, so $\alpha^U=\alpha_c^U\in M'$.

(3): given $\alpha\in\F(M)$ and an $L$-formula $\varphi(x,y)$ 
we have to define $M'\cap\varphi(M^*,\alpha^U)$ in $M'$, or equivalently, $\{a\in M\mid M^*\models\varphi(\alpha^U_a,\alpha^U)\}$ in $M$.
But for all $a\in M$,
$$
M^*\models\varphi(\alpha^U_a,\alpha^U) \ \Longleftrightarrow \ \{b\in M\mid M\models\varphi(a,\alpha(b)) \}\in U
\ \Longleftrightarrow \ M\models \textup{d}_Uy\varphi(a,\alpha(y)).
$$

(4): 
assume $M'\preceq N\preceq M^*$ and $N\neq M'$. Choose  $\alpha\in \F(M)$ with
$\alpha^{\uf U}\in N\setminus M'$. Choose $X\in U$ such that
$\alpha\res X$ is constant or injective. But it is not constant -- otherwise $X\subseteq\{x\in M\mid \alpha(x)=\alpha_a(x)\}\in\uf U $ for some $a\in M$, so $\alpha^{\uf U}=\alpha_a^{\uf U}\in M'$. 
Hence, $\alpha\res X$ is injective. 

We show $\beta^{\uf U}\in N$ for every
 $\beta\in \F(M)$. 
There is
$g\in \F(M)$ such that $(g\circ \alpha)\res X$ is the identity on $X$. Then
$
X\subseteq\big\{ x\in M\mid M\models \beta(g(\alpha(x))=\beta(x) \big\}\in\uf U.
$
The Lemma~\ref{lem:PV} equivalence for the (not necessarily quantifier-free) formula $\beta(g(u))=v$ implies $M^*\models \beta(g(\alpha^{\uf U}))=\beta^{U}$. But $N\equiv M$ is closed under $\beta\circ g$, so
$\beta^{\uf U}\in N$.  
\end{proof}

\section{Local witness models}\label{sec:lwm}

In this section we prove a strengthening of Theorem~\ref{thm:lwm}, 
so we assume $L=\PV$. Let $R\in\NP$ and let $g$ be a generator with bitgraph in $\NP$. The {\em bitgraph of} $g$
is the set
$$
\big\{\langle x,i,b\rangle\in\N\mid i\le |x|,b\in\{0,1\}, \bit(g(x),i)= b\big\}.
$$

We define the formula ``$g(x)\in R$'' from the introduction. We fix, arbitrarily, for every $R\in\NP$ 
a $\PV$-term $t_R(x)$ and a quantifier-free $\PV$-formula $V_R(x,y)$ such that 
$$
\exists y{<}t_R(x)\ V_R(x,y)
$$ 
defines~$R$ in $\N$. For notational simplicity, 
we choose a term $t_g(x)$ and 
a quantifier-free   $V_g(z,y)$ such that 
$\langle x,i,b\rangle$ for $x\in\N,i\le |x|,b<2$ belongs to 
the bitgraph of $g$ if and only if $\N\models \exists v{<}t_g(x)V_g(\langle x,i,b\rangle,v)$.
Then $\{x\in\N\mid g(x)\in R\}$ is defined in $\N$ by
$$
\begin{array}{lcl}
\text{``}g(x)\in R\text{''}& :=& \exists y{<}2^{|x|+1}\ \ \exists w{<}t_R(y)\ \forall i{\le}|x|\ \exists v{<}t_g(x)\ \big(V_g(\langle x,i,\bit(y,i)\rangle,v)\wedge V_R(y,w) \big).
\end{array}
$$

\begin{rem}\label{rem:coll} Assume $N$ is a local witness model for $g$ and $R$ such that
$$
N\models\forall\bar x\forall x \big(\forall i{\le}|x|\ \exists y{<}t(x)\ \varphi(\bar x,x,i,y)
\to\exists z\ \forall i{\le }|x|\ \varphi(\bar x,x,i,(z)_i)\big)
$$
for all quantifier-free $\PV$-formulas $\varphi(\bar x,x,i,y)$ and $\PV$-terms $t(x)$. Then
$$
R\cap\im(g)\neq \emptyset.
$$
\end{rem}

\begin{proof} $\forall\PV$ plus the displayed instances of collection proves $(\text{``}g(x)\in R\text{''}\leftrightarrow\varphi(x))$ for some existential $\varphi(x)$. 
By $N\models\forall\PV$ there exists an extension $N\subseteq N'\equiv \N$. 
Then $N\models\exists x \varphi(x)$ implies $N'\models\exists x\varphi(x)$. As $N'\equiv\N$, also $\N\models\exists x\varphi(x)$ and $\N\models\exists x\text{``}g(x)\in R\text{''}$.
\end{proof}

\subsection{Proof of Theorem~\ref{thm:lwm}}

The following implies Theorem~\ref{thm:lwm}. 

\begin{thm}\label{thm:stronglwm}
 Let $R,g$ be as in Theorem~\ref{thm:lwm}. Then there exist a proper, conservative, minimal elementary end extension $M^*$ of $M$ and a local witness model $N$ for 
$g$ and $R$ such that $$M\prec_1 N\subseteq M^*.$$
\end{thm}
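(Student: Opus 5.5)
Take $M^*:=\F(M)/U$ for an unbounded, uniformly d-selective ultrafilter $U$ in $\D(M)$, and let $N:=F/U$ for a suitable $L(M)$-closed $F\subseteq\F(M)$ containing all constant functions.

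\textbf{The ultrafilter and $M^*$.} Theorem~\ref{thm:exist} applies since $L=\PV$ is countable. It gives $U$ containing a prescribed unbounded set $X_0\in\D(M)$, which we fix later. By Lemma~\ref{lem:unidef}, $U$ is definable, so it has the pfip (Lemma~\ref{lem:dfblepfip}). Specializing uniform d-selectivity to parameter-free instances shows $U$ is d-selective. Proposition~\ref{prop:ultra} then makes $M^*$ a proper, conservative, minimal elementary end extension of $M'\cong M$.

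\textbf{The model $N$.} Lemma~\ref{lem:PV} gives $M'\preceq_1 F/U$, and $F/U$ embeds in $M^*$ via $\alpha^U\mapsto\alpha^U$, so $M\prec_1 N\subseteq M^*$ once $N\ne M'$. Since $N$ is a substructure of $M^*\models\forall\PV$ and universal sentences pass to substructures, $N\models\forall\PV$. It remains to choose $F$ and $X_0$ so that $N\models\exists x\,\text{``}g(x)\in R\text{''}$. I would take $F$ to be the functions $x\mapsto t(x)$ for $\PV$-terms $t$ with parameters from $M$, that is, polynomial time functions of the "generic" element $\textit{id}^U$. To make the single-valued $\forall i\,\exists v$ statement true inside $N$ without collection, the witnesses $v$ must be given by terms. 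This is where the hardness assumption (H), via KPT-witnessing, enters.

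\textbf{Choosing $X_0$ (main obstacle).} For $m\in M$ let $x(m)$ and $y(m)$ denote an input of length $|m|$ and a value with $y(m)\in R$, decoded from $m$. The existential witness will be $x:=x(\textit{id}^U)$, and we need
\[
\{m\mid \forall i{\le}|x(m)|\ \exists v{<}t_g\ V_g(\ldots)\}\in U .
\]
The difficulty is that $N$ sees only term-definable $v$, so for each $\PV$-term $t(m,i)$ we need
\[
\big\{m\mid \forall i\ V_g(\langle x(m),i,\bit(y(m),i)\rangle,t(m,i))\big\}\in U
\]
for the relevant witnessing terms. By KPT, the negation of the universal statement in $N$ would yield, for some term, a constant-round student computing $i$ with the teacher answering by counterexamples. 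The plan is to let $X_0$ consist of codes $m$ of pairs $(x,b)$ with $b\in R$, $|b|=|x|+1$ nonstandard and, in the $\aleph_1$-saturated or true-arithmetic $M$, $g(x)=b$ on a pseudo-finite portion. Then, applying (H) in $M$ to each of the countably many student-terms, obtain a descending sequence of unbounded definable sets on which that student fails, as in the iteration of Theorem~\ref{thm:exist}. Lemma~\ref{lem:uf} then yields an unbounded ultrafilter containing all of them; this ultrafilter must also be made uniformly d-selective, which I would achieve by interleaving the two constructions over the countably many formulas and terms. Turning (H), a statement about all $b\notin\im(g)$ and circuits of size $n^d$, into unboundedness of these sets for nonstandard $n$ in $M$ is the hard part. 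It uses elementarity $M\succeq\N$ together with the fact that $R$ infinite supplies $b\in R$ of every large length, with $b\notin\im(g)$ forced in the nontrivial case.
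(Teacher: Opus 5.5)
Your treatment of $M^*$ is correct and matches the paper. The unbounded, uniformly d-selective $U$ of Theorem~\ref{thm:exist} is definable by Lemma~\ref{lem:unidef}, hence has the pfip, so Proposition~\ref{prop:ultra} applies. And $M\prec_1 N=F/U\subseteq M^*$ follows from Lemma~\ref{lem:PV} once $\textit{id}\in F$.

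The gap is the choice of $F$ as the $\PV$-terms (with parameters) in the generic element. To get $N\models\text{``}g(x)\in R\text{''}$ with witnesses $x,y$, you need, for every $\iota\in F$ with $\iota^U\le|x|$, some $\gamma\in F$ with $N\models V_g(\langle x,\iota^U,\bit(y,\iota^U)\rangle,\gamma^U)$. With your $F$, that $\gamma$ is a polynomial-time function of $m$ that outputs an $\NP$-witness for a bit of $g$ on a $U$-large set. Nothing in your plan produces such terms. (H) says that small students fail to find a \emph{disagreeing} bit; it gives no polynomial-time access to witnesses for the \emph{agreeing} ones. Coding witnesses into $m$ does not help either, since a term can test the coded candidates with $V_g$ and output a position where they fail.

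Two further problems:
\begin{itemize}
\item Your displayed requirement is unsatisfiable as written. Quantifying over all $i\le|x(m)|$, it forces $g(x(m))=y(m)\in R$ in $M\equiv\N$, contradicting $R\cap\im(g)=\emptyset$.
\item KPT runs in the other direction. It extracts student--teacher protocols from $\forall\PV$-provability; it does not supply witnesses inside a given model.
\end{itemize}

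The missing idea is to put the teacher into $F$. The paper fixes canonical answers $(v_x)_i$: $v_x$ is the least sequence whose $i$th entry witnesses $\bit(g(x),i)=\bit(b_{|x|},i)$ or its negation (\emph{good} in the first case). Here $b_n,w_n$ are the least element of $R$ of length $n{+}1$ and its $V_R$-witness. It then lets $F$ consist of the output functions of all \emph{special students} $s\in M$. These are sequences of $(\PV\cup\{\dot b,\dot w\})$-terms, each term receiving the teacher's previous answers. Then $\textit{id},\beta,\omega\in F$, and for $\iota\in F$ given by $s$, appending the term $z_{\textit{lh}(s)-1}$ gives $\gamma\in F$ with $\gamma(x)=(v_x)_{\iota(x)}$. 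This $\gamma$ is the witness, and hardness is needed only to ensure it is good:
\begin{itemize}
\item Working in $\N$, (H) implies that the set $H_s$ of inputs on which the computation of a standard $s$ is good meets every sufficiently large length.
\item Composing the finitely many students $s<a$ shows that $X_a:=X_0\cap\bigcap_{s<a}H_s$ is unbounded.
\item Elementarity transfers this to all $a\in M$, covering nonstandard students and lengths for free. This is the step you identified as the hard part.
\item Lemma~\ref{lem:psint} gives a single pseudo-intersection $X$, and Theorem~\ref{thm:exist} applied to $X$ yields $U$ containing every $X_a$, in particular $X_{\tilde s+1}$.
\end{itemize}
So no interleaving of constructions is needed. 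A countable iteration would not cover the $|M|$ many functions in $F$ anyway.
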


\begin{proof}
We are looking for an unbounded, uniformly d-selective 
ultrafilter~$\uf U$ in~$\D(M)$ such that for a suitable family $F\subseteq\F(M)$
\begin{equation*}\label{eq:goal}
N:=F/\uf U\models \exists x\ \text{``}g(x)\in R\text{''}.
\end{equation*}
Then  we are done: by Lemmas~\ref{lem:unidef} and \ref{lem:dfblepfip},   $U$ has the properties of Proposition~\ref{prop:ultra}. Hence, $M\cong M'\prec M^*:=\F(M)/U$ is a conservative, minimal end extension (this is somewhat redundantly stated since conservative extensions are always end extensions).

\medskip

To find $U,F$ we use the complexity theoretic assumptions on $R,g$.
 If  
 $R\cap\im(g)\neq\emptyset$, then this is also true in $M^*\equiv M\equiv \N$ and we can take $F:=\F(M)$.
 So assume  $R\cap\im(g)=\emptyset$.

First work in the standard model $\N$.
For every $n\in\N$ let 
$b_n$ be the minimal element of length $n{+}1$  in $R$ if such an element exists;
if no such  element exists, set $b_n:=0$.  For every $n\in\N$ let $w_n$ be minimal with $V_R(b_n,w_n)$ if $b_n\in R$, and set $w_n:=0$ otherwise. 
Clearly, $n\mapsto b_n$ and $n\mapsto w_n$ are definable (in $\N$).

For $x\in\N$ let $v_x$ be minimal such that for all $i\le |x|$ we have either
$V_g(\langle x,i,\bit(b_{|x|},i)\rangle,(v_x)_i)$ or $V_g(\langle x,i,1{-}\bit(b_{|x|},i)\rangle,(v_x)_i)$. That is, $(v_x)_i$ witnesses either
$\bit(g(x),i)=\bit(b_{|x|},i)$ or $\bit(g(x),i)\neq\bit(b_{|x|},i)$ is true (in $\N$).
 In the former case we call $(v_x)_i$ {\em good}. Clearly, $x\mapsto v_x$ is definable.

Consider terms in the language $\PV$ plus new constants~$\dot b,\dot w$. There is a definable (in $\N$) function
$(s,n)\mapsto C_n^s$ that maps a code $s$ of such a term $ t(x_1,\ldots, x_r)$ (with variables $x_i$) to a (code of a) circuit $C_n^s$
such that $C^s_n(x_1,\ldots, x_r)$  is the value of $t$ on  numbers $x_1,\ldots, x_r$ of length~$n$. This {\em value} is straightforwardly defined understanding the constants $\dot b,\dot w$ to denote $b_n$ and $w_n$. 
Since $|b_n|,|w_n|\le n^c$ and functions in $\PV$ are computable in polynomial time there exists
 $d_s\in\N$ such that for all $n>1$ 
 the size of $C^s_n$ is $\le n^{d_s}$.


Call $s\in\N$ a {\em special student} if $s$ codes a sequence of codes of such terms, say $s_0,\ldots, s_{c_s}$ for $c_s:=\textit{lh}(s){-}1$ such that $s_i$ codes a term $t_i(x,z_0,\ldots, z_{i{-}1})$ (with at most the variables shown).

 The {\em computation} of $s$ on a number $x$ of length $n$ is the sequence $y$ of length $c_s{+}1$ defined as follows: 
\begin{enumerate}\itemsep=0pt
\item[--] let $(y)_0$ be the value of $t_0(x)$ on (the number) $x$ and set $i_0:=\min\{(y)_0,|x|\}$; 
\item[--] let $(y)_1$ be the value of $t_1(x,(v_x)_{i_0})$ and set $i_1:=\min\{(y)_1,|x|\}$; 
\item[--]\ldots
\item[--] let $(y)_{c_s}$ be the value of $t_{c_s}(x,(v_x)_{i_0},(v_x)_{i_1},\ldots,(v_x)_{i_{c_s-1}})$; this is the {\em output of $y$}.
\end{enumerate}

The computation~$y$ is {\em good} if all $(v_x)_{i_j}, j< c_s,$ are good. 
Clearly, the set of codes of special students and the set of $(s,x,y,z)$ such that $y$ is a (good) computation of $s$ on $x$ with output~$z$ are definable.

It follows from the hardness assumption (H) that for all special students $s$ and all sufficiently large $n$  (depending on $s$) there exists $x$ of length $n$ such that the computation of $s$ on $x$ is good. Given $s$, let $H_s$ be the set of such $x$.

Since $R$ is unbounded, so is $X_0:=\{x\in \N\mid b_{|x|}\neq 0\}$.
For $a>0$ set 
$$\textstyle
X_a:=X_0\cap \bigcap_{s< a}H_s,
$$ 
where $s$ ranges over codes of special students (we agree  that  $\bigcap\emptyset:=\N$). Then $(X_a)_{a\in\N}$ is uniformly definable and non-increasing. 

We claim that every $X_a$ is unbounded. For this we define the composition $s$ of finitely many special students $s_0,\ldots, s_{k}$  as follows: let $s_i, i\le k,$ code the sequence of terms $t_{ij}(x,z_0,\ldots, z_{j{-}1})$ for $j\le c_{i}:=\textit{lh}(s_i){-}1$; for $0<i\le k$ copy variables $z_j$ to $z_{d_i+j}$ where $d_i:=\sum_{\ell<i}c_{\ell}$; let $s$ code the sequence $(t_{00},\ldots, t_{0(c_0-1)},t_{10},\ldots, t_{1(c_1-1)},\ldots, t_{k0},\ldots, t_{k c_{k}})$. Then the computation of $s$ on $x$ is good if and only if the computations of all $s_i,i\le k,$ on $x$ are good. 

Given $y$ we find $x>y$ in $X_a$ as follows: let $s$ be the composition of all special students $<a$ and choose $n>|y|$ with $b_n\neq	 0$ and sufficiently large such that $H_s$ contains some $x$ of length~$n$; then $x>y$ is in $X_a$.




\medskip

Now work in $M$. Special students and their computations are explained in $M\succeq\N$ via their definitions. We let $F$ consist of all functions $\alpha:M\to M$ such that for some special student $s\in M$ we have for all $x$ that $\alpha(x)$ is the output of the computation of $s$ on $x$.

It should be clear that $F$ is $L(M)=(\PV\cup M)$-closed. Indeed, $\alpha_a\in F$ for every $a\in M$ because there is a  special student in the sense of $M$ that constantly outputs~$a$ (this holds in $M\equiv\N$ because it holds in $\N$).

Applying the uniform definition of $(X_a)_{a\in \N}$ in $M$ gives  $(X_a)_{a\in M}$ in $\D(M)$ and, by $\N\equiv M$, it is non-increasing and unbounded. By Lemma~\ref{lem:psint}, it has a pseudo-intersection $X\in\D(M)$. By Theorem~\ref{thm:exist} there is an unbounded, uniformly d-selective 
ultrafilter $\uf U$ containing~$X$. 

Thus, $X_a\in\uf U$ for all $a\in M$.

\medskip

 We are left to show $F/\uf U\models\exists x\ \text{``}g(x)\in R\text{''}$. Note the identity $\textit{id}$ on $M$ is in $F$  via the one-term special student $(x)$. Via their definitions we
 have in $M$ the functions  $x\mapsto b_{|x|},x\mapsto w_{|x|}$ (for $x\in M$); we denote them by $\beta,\omega$. Then $\beta,\omega\in F$ via the one-term special students $(\dot b),(\dot w)$. We claim that $\textit{id}^{\uf U},\beta^{\uf U},\omega^{\uf U}$ witness $x,y,w$ in $\exists x\ \text{``}g(x)\in R\text{''}$.
 
 Since $M\models V_R(b_{|x|},w_{|x|})$ for all $x\in X_0\in\uf U$, we have
 $F/\uf U\models V_R(\beta^{\uf U},\omega^{U})$ (Lemma~\ref{lem:PV}). 
 Since $|\beta(x)|=|x|{+}1$ and 
 $\omega(x)< t_R(\beta(x))$ for all $x\in X_0\in U$, we have
 $\beta^U<2^{|\textit{id}^U|+1}$ and  $\omega^U<t_R(\beta^U)$
  in~$F/\uf U$.
 Let $\iota\in F$ such that $\iota^{\uf U}\le |\textit{id}^{\uf U}|$ in $F/\uf U$, i.e., 
 $$
 Y:=\big\{x\in M\mid M\models \iota(x)\le |x|\big\}\in \uf U.
 $$ 
 
We have to show
$$
F/\uf U\models \exists v{<}t_g(\textit{id}^U)\ V_g(\langle \textit{id}^{\uf U},\iota^{\uf U},\bit(\beta^{\uf U},\iota^{\uf U})\rangle,v).
$$ 
 Assume $\iota$ is given by the special student $s\in M$.   
Let $\gamma\in F$ be given  by the student $\tilde s$ with $\textit{lh}(\tilde s)=\textit{lh}(s){+}1$ obtained from $s$ by appending the (code of the) term $z_{\textit{lh}(s) - 1}$. 
For $x\in Y$ the output $i$ of $s$ on $x$  satisfies $i\le |x|$; note $i=\iota(x)$. The output of $\tilde s$ on $x$  is $\gamma(x)=(v_x)_i$. Then 
$\gamma(x)<t_g(x)$ for all $x\in Y\in\uf U$ and hence
$$
F/\uf U\models \gamma^{\uf U}<t_g(\textit{id}^U) .
$$


For every
 $x\in X_{\tilde s+1}\in\uf U$ we have that $\gamma(x)=(v_x)_i$ is good, i.e.,
$$
Y \cap X_{\tilde s+1}\subseteq\big\{x\in M\mid M\models V_g(\langle x,\iota(x),\bit(b_{|x|},\iota(x))\rangle,\gamma(x))\big\}.
$$
It follows that the latter set is in $\uf U$ and thus $F/\uf U\models V_g(\langle \textit{id}^{\uf U},\iota^{\uf U},\bit(\beta^{\uf U},\iota^{\uf U})\rangle,\gamma^{\uf U})$.
\end{proof}

\subsection{First corollary}

It is natural to ask for  a  local witness model for $g$ and  all infinite $R\in\NP$ simultaneously.  We observe such a model exists under
a slight strengthening of (H). It states that it is hard, given finitely many tasks $(\textup{T}_b)$, to solve at least one of them. This is  
similar  to $\bigvee$-hardness defined on the propositional side in \cite{krabull} (or \cite[Section~4.2]{krapcg}) -- there the use is different, namely in order to amplify the stretch of a generator.

Let $k\in\N$. For $b_0, \ldots, b_k \notin\im(g)$ of lengths $|b_j|=n_j{+}1$ (all $j\le k$) 
the task $(\textup{T}_{b_0,\ldots, b_k})$ is to find on input $(x_0,\ldots, x_k)$ with $|x_j|=n_j$ (all $j\le k$)
some $(i_0,\ldots, i_k)$ with $i_j\le n_j$ (all $j\le k$) such that $\bit(b_j,i_j)\neq\bit(g(x_j),i_j)$ for {\em some} $j\le k$. In the student-teacher computation the student computes as candidate solutions $(k{+}1)$-tuples $(i_0,\ldots, i_k)$ as above and the teacher answers with a counterexample
$(v_0,\ldots ,v_k)$, namely every $v_j$ witnesses $\bit(b_j,i_j)=\bit(g(x_j),i_j)$. If no such counterexample exists, the student succeeds. If this happens  within $c$ rounds on every $(x_0,\ldots, x_k)$ as above and all allowed answers of the teacher,  then the student {\em solves $(\textup{T}_{b_0,\ldots, b_k})$ in $c$ rounds}. As before, we understand such students formally as $c$-tuples of circuits.

\begin{quote} 
($\vee$H) \em For all $k,c, d\in\N$, all sufficiently large $n_0, \ldots, n_k$ and all $b_0, \ldots, b_k \notin \im(g)$ with $|b_j| = n_j{+}1$ (all $j\le k$) the following holds. Every student that solves $(\textup{T}_{b_0, \ldots, b_k})$ in $c$ rounds has  size  $\ge (n_0 + \ldots + n_k)^d$.
\end{quote}

\begin{cor}\label{cor:lwm} Assume $g$ is a generator with bitgraph in $\NP$ that satisfies the hardness assumption $(\vee\textup{H})$. Then there exists a model $N \succ_1 M$ that is a local witness model for $g$ and all infinite $R\in  \NP$.
\end{cor}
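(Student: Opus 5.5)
The plan is to rerun the proof of Theorem~\ref{thm:stronglwm} with one ultrafilter serving countably many sets $R$ at once. We may drop every infinite $R\in\NP$ with $R\cap\im(g)\neq\emptyset$, because such $R$ are handled by any $N\models\forall\PV$ with $N\succeq_1 M$ (truth of the $\Sigma_1$ instance in $\N\equiv M$ transfers upward). Enumerate the remaining infinite $R\in\NP$ (together with their fixed $t_R,V_R$) as $R_0,R_1,\ldots$; there are countably many. For each $j$ define in $\N$ the functions $n\mapsto b^j_n$ and $n\mapsto w^j_n$ exactly as before, and $x\mapsto v^j_x$ relative to $b^j_{|x|}$.

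The domain of the ultrapower is no longer $M$ but tuples. For $k\in\N$, take elements $\bar x=\langle x_0,\ldots,x_k\rangle$ and let special students be terms with constants $\dot b_j,\dot w_j$ ($j\le k$) and access to the inputs $x_j$. In each round a student asks a tuple $(i_0,\ldots,i_k)$ and receives $((v^0_{x_0})_{i_0},\ldots,(v^k_{x_k})_{i_k})$; the computation is good if all these answers are good. From ($\vee$H) we get: for every such student $s$ and all sufficiently large $n_0,\ldots,n_k$ with $b^j_{n_j}\neq0$, there is an input with $|x_j|=n_j$ on which the computation is good. This works because the teacher's good answers are a legitimate teacher strategy, and if $s$ succeeded everywhere it would solve $(\textup{T}_{b_0,\ldots,b_k})$ with size polynomial in $\sum n_j$. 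To avoid a separate ultrapower for each $k$, I would take as index set $M$ itself, where $x$ codes a sequence $(x)_0,\ldots,(x)_{\textit{lh}(x)-1}$, and consider the $j$th component for $j<\textit{lh}(x)$. Let $X^{(k)}_a$ be the set of $x$ with $\textit{lh}(x)>k$, $b^j_{|(x)_j|}\neq 0$ for $j\le k$, and a good computation for all special students ($k$-ary, i.e.\ mentioning $R_0,\ldots,R_k$) with code $<a$. Composition of students, as in the theorem, shows each $X^{(k)}_a$ is unbounded in $\N$. By elementarity the family $(X^{(k)}_a)_{a\in M}$ is uniformly definable, non-increasing and unbounded in $M$, and $X^{(k+1)}_a\subseteq X^{(k)}_a$.

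Next, Lemma~\ref{lem:psint} gives for each $k$ a pseudo-intersection $Z_k$ of $(X^{(k)}_a)_a$. To get the unbounded intersection property across different $k$, I would instead use pseudo-intersections of the diagonal family $X^{(k)}_a$ for the $k$-th level and note $Z_{k+1}\subseteq^* X^{(k+1)}_a\subseteq X^{(k)}_a$. So it suffices to take each $Z_{k+1}$ inside $Z_k$ up to a bounded set, by intersecting the family with $Z_k$ before applying Lemma~\ref{lem:psint}. Unboundedness of these intersections is the main obstacle: it needs $Z_k\cap X^{(k+1)}_a$ unbounded. I would try to ensure this by building the chain as in the proof of Theorem~\ref{thm:exist}, or more simply by choosing $Z_k$ as the image of $a\mapsto\min(X^{(k)}_a\setminus[a])$ refined inside $X^{(k+1)}_a$. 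Then Lemma~\ref{lem:uf} gives an unbounded ultrafilter $U$ containing all $Z_k$, hence all $X^{(k)}_a$ with $k\in\N$ and $a\in M$. We do not need d-selectivity, only $M\preceq_1 F/U$ from Lemma~\ref{lem:PV}.

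Finally, let $F$ consist of the functions computed by special students of any standard arity $k$. This family is $L(M)$-closed, since two students can be combined at the larger arity. Put $N:=F/U$. Then $N$ is a proper extension because the projection $x\mapsto(x)_0$ lies above all constants on sets in $U$, and $M\prec_1 N$ by Lemma~\ref{lem:PV}. For each $R_j$, the witnesses are $((x)_j)^U$, $\beta_j^U$ and $\omega_j^U$, and the verification of ``$g(x)\in R_j$'' is verbatim the last part of the proof of Theorem~\ref{thm:stronglwm}. Given $\iota\in F$ of arity $k\ge j$, append a term querying the $j$th coordinate to get $\gamma$, and use $X^{(k)}_{\tilde s+1}\in U$ for goodness.
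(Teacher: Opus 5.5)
The step you flag as the main obstacle does not work as you set it up. It is also unnecessary.

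\textbf{Why the pseudo-intersection chain fails.} A pseudo-intersection $Z_k$ of $(X^{(k)}_a)_{a\in M}$ only satisfies $Z_k\subseteq^* X^{(k)}_a$. Nothing stops it from consisting entirely of codes of sequences of length exactly $k{+}1$. For instance, Lemma~\ref{lem:psint} applied to the still unbounded, non-increasing family $X^{(k)}_a\cap\{x\mid \textit{lh}(x)=k{+}1\}$ yields such a $Z_k$. Then $Z_k\cap X^{(k+1)}_0=\emptyset$, so no unbounded $Z_{k+1}$ can be almost contained in both. Refining $Z_k$ inside $X^{(k+1)}_a$ only moves the same problem to level $k{+}2$. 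The chain construction of Theorem~\ref{thm:exist} does not help either: there each new family is built \emph{relative to} the given unbounded set, whereas your $X^{(k+1)}_a$ are fixed by the hardness data. So, as written, you have not shown the unbounded intersection property for $\{Z_k\mid k\in\N\}$.

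\textbf{The repair.} Drop pseudo-intersections altogether and apply Lemma~\ref{lem:uf} directly.
\begin{itemize}
\item $X^{(k)}_a$ is non-increasing in $a$.
\item Make it non-increasing in $k$ by definition, e.g.\ let $X^{(k)}_a$ also require goodness for all students of arity $k'<k$ with code $<a$. Unboundedness in $\N$ survives, because such a student can be simulated at arity $k$ with dummy queries.
\item Each $X^{(k)}_a$ is unbounded in $M$ by elementarity.
\end{itemize}
So every finite subfamily of $\{X^{(k)}_a\mid k\in\N,\ a\in M\}$ has an intersection containing some $X^{(K)}_A$, and Lemma~\ref{lem:uf} gives the required unbounded $U$. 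This is exactly how the paper obtains its ultrafilter.

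\textbf{Properness.} Your reason is not immediate: unboundedness of $U$ makes $x$ large, not obviously $(x)_0$. It is simpler to note that $N\models\exists x\ \text{``}g(x)\in R_0\text{''}$, which fails in $M\equiv\N$.

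\textbf{Comparison with the paper.} With this repair your argument is valid and differs from the paper's. The paper never handles infinitely many $R$ in one ultrapower. By compactness it only needs the universal diagram of $M$ to be consistent with finitely many sentences $\exists x\ \text{``}g(x)\in R_j\text{''}$, $j\le k$. For fixed $k$ it then uses fixed-length tuples $\langle x_0,\ldots,x_k\rangle$ and a single non-increasing family $(X_a)_{a\in M}$, as in Theorem~\ref{thm:stronglwm}, so no cross-arity bookkeeping arises; its $N$ is just some model of the resulting theory. Your construction avoids compactness and gives one explicit $N=F/U$ inside the definable ultrapower $\F(M)/U$. The price is coding variable-length sequences, nesting across arities, and simulating lower-arity students at higher arity.
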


\begin{proof} We want to
show that the universal diagram of $M$ is consistent with the set of sentences
$\exists x\ \text{``}g(x)\in R\text{''}$ for all infinite $R\in\NP$. By compactness, it suffices to show this for finitely many infinite  $R_0,\ldots, R_k\in\NP$. By Lemma~\ref{lem:PV} is suffices to find 
a suitable $F\subseteq\F(M)$ and an unbounded ultrafilter $U$ in $\D(M)$ such that
$F/U\models \exists x\  \text{``}g(x)\in R_j\text{''}$ for all $j\le k$. 

We can  assume that every $R_j$ is disjoint from $\im(g)$: if, say,
$R_0\cap\im(g)\neq\emptyset$, then
 $\N\models \text{``}g(\ell)\in R_j\text{''}$ for some $\ell\in\N$; but then this  bounded sentence holds in $F/U$ being an end extension of $\N$ (up to $\cong$). 

We proceed as in Theorem~\ref{thm:stronglwm}. Define the functions 
$$
\langle	x_0,\ldots, x_k\rangle\mapsto \langle b^0_{|x_0|},\ldots b^k_{|x_k|}\rangle,\langle w^0_{|x_0|},\ldots,w^k_{|x_k|}\rangle, \langle v^0_{x_0},\ldots v^k_{x_k}\rangle$$ 
such that $b^j_{|x_j|},w^j_{|x_j|},v^j_{x_j}$ are chosen as before for $R_j$.
Given a $(\PV\cup\{\dot b,\dot w\})$-term $s$ and $n_0,\ldots, n_k$, the value of $s$ on inputs $\langle x_0,\ldots,x_k\rangle$ with $|x_j|=n_j$ (all $j\le k$) is computed by a small circuit; the values
of $\dot b$ and $\dot w$ are now $\langle b^0_{n_0},\cdots,b^k_{n_k}\rangle$ and $\langle w^0_{n_0},\ldots, w^k_{n_k}\rangle$. Special students are defined as before and their computations are defined on $\langle x_0,\ldots,x_k\rangle$ as above straightforwardly: the student's candidate solutions on $x=\langle x_0,\ldots, x_k\rangle$ become $(k{+}1)$-tuples, e.g., $i_0$ becomes 
$\langle i_0,\ldots, i_k\rangle$ with $i_j= \min\{ \pi_j((y)_0),|x_0|\}$; here, $\pi_j\in \PV$ denotes the function
$\langle x_0,\ldots, x_k\rangle\mapsto x_j$. A computation is {\em good} if all $i_j,j\le k,$ from all rounds are good w. r. t. $(\textup{T}_{b^j_{n_j}})$ , i.e., $(v^j_{x_j})_{i_j}$ witnesses that
$\bit(b^j_{|x_j|},i_j)=\bit(g(x_j),i_j)$.
Set 
$$X_0\ :=\ \big\{\langle x_1, \ldots, x_k\rangle \in \N \mid b_{|x_j|}^j \neq 0\text{ for all }j \le k\big\}.
$$
For a special student $s$ let $H_{s}$ be the set of $\langle x_1, \ldots, x_k\rangle \in \N $ such that the computation of $s$ on $\langle x_1, \ldots, x_k\rangle$ is good. For $a>0$ set
$$\textstyle
  X_a := X_0 \cap \bigcap_{s < a} H_s,
$$
where  $s$ ranges over special students. 
As before, but using ($\vee$H), one can show that  $(X_a)_{a\in \N}$ is uniformly definable, non-increasing and  unbounded. The definition gives such $(X_a)_{a\in M}$ in $\D(M)$ and Lemma~\ref{lem:uf} gives an unbounded ultrafilter
$U$ in $\D(M)$ that contains all $X_a,a\in M$. 

Given $j\le k$ we have to show $F / U \models \exists x \text{``}g(x) \in R_j\text{''}$. But
$\pi_j^U, \beta_j^U, \omega_j^U$ witness $x, y, w$ where $\beta_j,\omega_j\in F$ are the functions 
$\langle x_0,\ldots, x_k\rangle\mapsto 
 b^j_{|x_j|}, 
 w^j_{|x_j|}$: argue as  in Theorem~\ref{thm:stronglwm}.
\end{proof}

\begin{rem}\label{rem:impl}\
Note that for every $R\in\NP$
$$
N\models\big(\textup{``} R \textit{ is infinite} \textup{''}\ \to\ \exists x\  \text{``}g(x)\in R\text{''}\big).
$$ 
Indeed, for finite $R\in\NP$ we have $R\subseteq[m]$ for some $m\in\N$; this universal statement belongs to $\forall\PV$ and 
 implies $\neg \textup{``} R \textit{ is infinite} \textup{''}$.
\item \cite{krabull} constructs a model of these implications for  the polynomial time computable truth table generator~$g=\textbf{tt}_{2^{k/2},k}$. However, instead of ($\vee$H), this construction is based on
the hypothesis that $\P\subseteq\mathsf{SIZE}(n^d)$ for some $d\in\N$. This hypothesis
goes back to Kolmogorov (cf.~\cite[p.564]{jukna}) and is today typically  dubbed implausible.
  %
 %
  %
%
\end{rem}

\subsection{Second corollary}

We define the uniform version (uH) of  (H). 
A {\em polynomial time student} is a polynomial time  algorithm $\mathbb A$. Given $b\notin\im(g)$ of length 
$n{+}1$ consider the student-teacher computation where in  round $k$ the student candidate solution $i$ is computed from $x$ and previous counterexamples $v_0,\ldots, v_{k-1}$ by $\mathbb A$.
As before,
 $\mathbb A$ {\em solves} (T$_b$) {\em in $c$ rounds} if for all $x$ of length $n$ there is a round $ \le c$
 such that the candidate solution $i$ computed in this round satisfies $\bit(g(x),i)\neq \bit(b,i)$.
\begin{quote} 
(uH) \em For all $c\in\N$, all polynomial time students $\mathbb A$, all sufficiently large $n$, and all $b\notin \im(g)$ of length $n{+}1$: $\mathbb A$ does not solve \textup{(T$_b$)} in $c$ rounds.
\end{quote}

\begin{df}\label{df:inf} $R\in\NP$ is {\em feasibly infinite} if $\forall\PV$ proves
$$
\textup{``} R \textit{ is infinite} \textup{''} \ :=\ \forall z\exists x\exists y{<}t_R(x) (z<x\wedge V_R(x,y)).
$$
\end{df}

The following implies the variant of Theorem~\ref{thm:lwm} with (uH) replacing (H).

\begin{cor}\label{cor:lwm2} Assume $R$ is feasibly infinite and $g$ is a generator with bitgraph in $\NP$ that satisfies the hardness hypothesis \textup{(uH)}. Then there exist a proper, conservative, minimal elementary end extension $M^*$ of $M$ and a local witness model $N$ for 
$g$ and $R$ such that $$M\prec_1 N\subseteq M^*.$$
\end{cor}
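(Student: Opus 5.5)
The plan is to run the proof of Theorem~\ref{thm:stronglwm} again with two changes. First, the nonuniform constants $\dot b,\dot w$ are replaced by $\PV$-terms supplied by the feasible infiniteness of $R$. Second, special students become plain sequences of $\PV$-terms, so that each one is a polynomial time student and (uH) applies to it. As there, if $R\cap\im(g)\neq\emptyset$ we take $F:=\F(M)$ and are done, so assume $R\cap\im(g)=\emptyset$.

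\emph{Step 1: witnessing.} Since $\forall\PV$ proves $\textup{``}R\textit{ is infinite}\textup{''}$ and $\forall\PV$ is universal, Herbrand's theorem gives finitely many $\PV$-terms witnessing $x,y$. Here the bounded quantifier $\exists y{<}t_R(x)$ is absorbed into a quantifier-free matrix. Combining these terms by definition by cases, which is available in $\PV$, I expect $\PV$-functions $f,h$ such that $\N\models z<f(z)\wedge h(z)<t_R(f(z))\wedge V_R(f(z),h(z))$ for all $z$.

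\emph{Step 2: the domain.} The lengths $|f(z)|$ are not controlled, so I would take as domain of the ultrapower the pairs $\langle z,x\rangle$ with $|x|{+}1=|f(z)|$, and set $X_0$ to be the set of such pairs. Using Step 1, this set is unbounded. The relevant functions are then:
\begin{itemize}
\item $\pi_1:\langle z,x\rangle\mapsto x$, which plays the role of $\textit{id}$;
\item $\beta:=f\circ\pi_0$ and $\omega:=h\circ\pi_0$, which replace $b_{|x|}$ and $w_{|x|}$.
\end{itemize}
All three are given by $\PV$-terms, so no constants are needed. Define $v_{\langle z,x\rangle}$ as before, relative to $b:=f(z)$. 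A special student $s$ is now a sequence of codes of $\PV$-terms $t_j(u,z_0,\ldots,z_{j-1})$ run on $u=\langle z,x\rangle$, and goodness of its computation is defined as before. Let $H_s$ be the set of $\langle z,x\rangle\in X_0$ on which the computation of $s$ is good, and put $X_a:=X_0\cap\bigcap_{s<a}H_s$.

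\emph{Step 3: unboundedness of $X_a$ in $\N$.} This is the step I expect to be the main obstacle. Composition of students works exactly as before, so it suffices to show that each $H_s$ is unbounded. Suppose instead that every computation of $s$ is eventually not good. Then for all large $z$, $s$ solves $(\textup{T}_{f(z)})$ in $c_s$ rounds. The difficulty is that $s$ reads $z$, whereas in (uH) the student receives only $x$ (and the counterexamples), with $b$ fixed.

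I would first try to apply (uH) to the polynomial time algorithm $s$ itself, regarding $z$ as part of its uniform input. For this I would read (uH) as quantifying over students that may be given a polynomial time computable description of $b$, here $z$. The aim is to show that this reading is implicit in ``for all sufficiently large $n$ and all $b$'', since $b$ is fixed while the student is uniform. If that reading is not admissible, the fallback is to make $z$ recoverable from $x$ (or from $n$) in polynomial time. One option is to replace $f$ by $z\mapsto f(2^{|z|})$ so that only one $z$ per length matters. Another is to choose the witness $z$ as a $\PV$-function of $n$ obtained by searching the terms of Step 1. In either case a contradiction with (uH) would follow, and $H_s$ would be unbounded.

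\emph{Step 4: transfer and conclusion.} Transfer $(X_a)_{a\in\N}$ to $M$ as before. Lemma~\ref{lem:psint} gives a pseudo-intersection of $(X_a)_{a\in M}$, and Theorem~\ref{thm:exist} gives an unbounded, uniformly d-selective $U$ containing it. Let $F$ be the set of outputs of special students in the sense of $M$; it is $L(M)$-closed, since constant students exist. Then $M^*:=\F(M)/U$ is as required by Lemmas~\ref{lem:unidef}, \ref{lem:dfblepfip} and Proposition~\ref{prop:ultra}. Finally, the verification that $\pi_1^U,\beta^U,\omega^U$ witness $\exists x\,\text{``}g(x)\in R\text{''}$ in $N:=F/U$ is copied verbatim from Theorem~\ref{thm:stronglwm}, using the appended-term student $\tilde s$ and $X_{\tilde s+1}\in U$; Lemma~\ref{lem:PV} then gives $M\prec_1 N\subseteq M^*$.
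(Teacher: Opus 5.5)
Your Steps 1, 2 and 4 are fine. Step 3, however, is the one point where this corollary differs from Theorem~\ref{thm:stronglwm}, and you leave it open; the route you try first there does not work.

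\textbf{Why the first route fails.} In (uH) the algorithm sees only $x$ and the counterexamples, so it knows nothing about $b$ beyond $n$. Letting it also read $z$ is not implicit in the quantifier order. Many $z$ have $|f(z)|=n{+}1$. If your strengthened statement fails, this only provides, for infinitely many $n$, \emph{some} $z$ on which $s(z,\cdot)$ solves $(\textup{T}_{f(z)})$. A uniform algorithm on input $x$ has no way to find that $z$. So $z$ acts as advice (it may even determine $b$ completely), and the statement is a stronger, partly nonuniform hypothesis that (uH) does not imply.

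\textbf{Why the first fallback is not enough.} Replacing $f$ by $z\mapsto f(2^{|z|})$ does not help by itself. Several lengths $|z|$ can give the same $|f(2^{|z|})|$, and $s$ still reads $z$. You still need one canonical $z$ per $n$ that is computable in polynomial time.

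\textbf{The missing ingredient} is the bounded search the paper uses. From $1^{(k)}<f(1^{(k)})$ you get $|f(1^{(k)})|\ge k{+}1$. Hence any $k$ with $|f(1^{(k)})|=n{+}1$ satisfies $k\le n$. So the least such $k$ can be found in polynomial time from $1^{(n+1)}$, or from any $x$ of length $n$, and such a $k$ exists for infinitely many $n$.

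\textbf{How the paper uses it.} The paper puts this search into $\ell\in\PV$ and sets $b_n:=f_0(\ell(1^{(n+1)}))$ and $w_n:=f_1(\ell(1^{(n+1)}))$. Then $x\mapsto b_{|x|}$ and $x\mapsto w_{|x|}$ are polynomial time. The ultrapower can stay on $M$, with $\textit{id},\beta,\omega$ given by one-round students. (uH) then applies verbatim to every student, since a student can compute $b_{|x|}$ itself.

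\textbf{How it repairs your version.} With the same search your pair domain also works. To see that $H_s$ is unbounded, apply (uH) to the algorithm that on input $x$ computes this least $k$ and simulates $s$ on $\langle 1^{(k)},x\rangle$. This yields good pairs $\langle 1^{(k)},x\rangle$ for all sufficiently large $n$ for which such a $k$ exists.

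As written, though, your proposal only names this option as one of several unresolved alternatives, so the decisive step is not proved.
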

\begin{proof} We modify the proof of Theorem~\ref{thm:stronglwm} and look for a suitable 
$F\subseteq\F(M)$ and an unbounded, uniformly d-selective ultrafilter $U$. As before, we can 
assume $R\cap\im(g)=\emptyset$.
 The assumption that $R$ is feasibly infinite and Herbrand's theorem give
$f_0,f_1\in\PV$ such that 
$$
\forall\PV\vdash x<f_0(x)\ \wedge\ f_1(x)<t_R(f_0(x))\ \wedge\ V_R(f_0(x),f_1(x)).
$$
Write $1^{(n)}:=2^{n}{-}1$ and note $|f_0(1^{(n)})|>n$.
Consider the following algorithm on input $x\in\N$:

\begin{enumerate}\itemsep=-2pt
\item  $k\gets 0$
\item while $k<|x|$
\item \quad  if $|f_0(1^{(k)})|=|x|$, halt with output $1^{(k)}$
\item \quad $k\gets k{+}1$
\item halt with output 0
\end{enumerate}

It is clear that this algorithm runs in polynomial time, 
so it  computes a function $\ell(x)\in\PV$. For $n\in\N$
 set $b_n:=f_0(\ell(1^{(n + 1)}))$ if $\ell(1^{(n + 1)})\neq 0$; else set $b_n:=0$. 
For $n\in\N$ set $w_n:=f_1(\ell(1^{(n + 1)}))$ if $b_n\neq 0$; else set $w_n:=0$.
Note that
$(b_n,w_n)\in V_R$ and $w_n<t_R(b_n)$ 
and $|b_n|=n + 1$ if $b_n\neq 0$. 
It is clear that $x\mapsto b_{|x|}$ and $x\mapsto w_{|x|}$ are computable in polynomial time.

 For  $x\in \N$ let $v_x$
 be minimal such that for all $i \le |x|$  either
$V_g(\langle x,i,\bit(b_{|x|},i)\rangle,(v_x)_i)$ or $V_g(\langle x,i,1{-}\bit(b_{|x|},i)\rangle,(v_x)_i)$ is true (in $\N$). In the former case, call $(v_x)_i$ {\em good}. It is clear that $x\mapsto v_x$ is definable (in $\N$).

Let every $s\in\N$ code a ternary function symbol $\hat s\in\PV$ such that $(s,c,x,z)\mapsto \hat s(c,x,z)$ is definable and for each fixed $s\in\N$, $(c,x,z)\mapsto \hat s(c,x,z)$ is polynomial time.
A {\em student} is a pair~$\langle s,c\rangle$ with $c>0$. The
 {\em computation} of $\langle s,c\rangle$ on $x$ is
the length $c{+1}$ sequence $y$ such that
 \begin{enumerate}\itemsep=0pt
\item[--] $(y)_0:=\hat s(0,x,0)$  and  $i_0:=\min\{(y)_0,|x|\}$; 
\item[--] $(y)_1:=\hat s(1,x,(v_x)_{i_0})$ and  $i_1:=\min\{(y)_1,|x|\}$; 
\item[--]\ldots
\item[--] $(y)_{c}:=\hat s(c,x,\langle (v_x)_{i_0},(v_x)_{i_1},\ldots,(v_x)_{i_{c-1}}\rangle)$; this is the {\em output of $y$}.
\end{enumerate}
The computation is {\em good} if all $(v_x)_{i_j},j<c,$ are good. The definition ensures that there is a definable map $(s,c)\mapsto s_c$  such that the computation of the student $\langle s_c,c{+}1\rangle$ on any~$x$ prolongues the one of $\langle s,c\rangle$ on $x$ by one round with output $(v_x)_{i_c}$ for
$i_c:=\min\{(y)_c,|x|\}$.
 Namely, take $s_c\in\N$ such that $\hat s_c(d,x,z):=\hat s(d,x,z)$ if $d\le c$, and else
$\hat s_c(d,x,z):=(z)_{c}$.

It follows from the hardness assumption (uH) that for all  students $\langle s,c\rangle$ and all sufficiently large $n$  (depending on $\langle s,c\rangle$) with $b_n\neq 0$ there exists $x$ of length $n$ such that the computation of $s$ on $x$ is good. 
Given $\langle s,c\rangle$, let $H_{\langle s,c\rangle}$ be the set of such $x$.

Set $X_0:=\{x\in\N\mid b_{|x|}\neq 0\}$ and note $X_0$ is unbounded.
For $a>0$ set 
$$\textstyle
X_a:=X_0\cap \bigcap_{\langle s,c\rangle< a}H_{\langle s,c\rangle},
$$ 
where $\langle s,c\rangle$ ranges over  students (we agree  that  $\bigcap\emptyset:=\N$). Then $(X_a)_{a\in\N}$ is uniformly definable (in $\N$), non-increasing and unbounded. For unboundedness argue as in Theorem~\ref{thm:stronglwm} and define
a {\em composition} $(s,c_0+\cdots +c_k)$ of finitely many students $(s_0,c_0),\ldots,(s_k,c_k)$ such that
  the computation of $s$ on any $x$ is good if and only if the computations of all $s_i,i\le k,$ on $x$ are good. We omit the details.

Let $F\subseteq\F(M)$ consist of the functions $\alpha: M\to M$ that map $x$ to the output of a computation of a student -- students and their computations are explained in $M$ via their definitions in $\N$. As before, $F$ is $(\PV\cup M)$-closed. Applying its definition in $\N$, we have $(X_a)_{a\in M}$ in $\D(M)$ non-increasing and unbounded. As before, there exists an unbounded, uniformly d-selective ultrafilter $U$ in $\D(M)$ that contains all $X_a,a\in M$.

 We are left to show $F/\uf U\models\exists x\ \text{``}g(x)\in R\text{''}$. 
Let $\textit{id},\beta,\omega$ denote  $x\mapsto x,x\mapsto b_{|x|},x\mapsto w_{|x|}$ (for~$x\in M$). Then
$\textit{id},\beta,\omega\in F$ are witnessed by students $\langle s,1\rangle$ for suitable $s\in \N\subseteq M$.
 We claim that $\textit{id}^{\uf U},\beta^{\uf U},\omega^{\uf U}$ witness $x,y,w$ in $\exists x\ \text{``}g(x)\in R\text{''}$. 

Clearly, 
$\beta^U<2^{|\textit{id}^U|+1}$ and $\omega^{\uf U}< t_R(\beta^U)$  in $F/\uf U$.
  Let $\iota\in F$ be such that $\iota^{\uf U}\le |\textit{id}^{\uf U}|$ in $F/\uf U$, i.e., 
 $
 Y:=\big\{x\in M\mid M\models \iota(x)\le |x|\big\}\in \uf U.
 $ 
 Assume $\iota$ is given by the student $\langle s,c\rangle\in M$.  We have the function $(s,c)\mapsto s_c$ in $M$ via its definition in $\N$. 
 Let $\gamma\in F$ be given  by the student $\langle s_c,c{+}1\rangle\in M$. 
On $x\in Y\in U$ the output $i:=\iota(x)$ of (the computation of) $\langle s,c\rangle$ on $x$ satisfies $i\le |x|$; the output of $\langle s_c,c{+}1\rangle$ on $x$  is $\gamma(x)=(v_x)_i$. As $|(v_x)_i|\le t_g(x)$ for all $x\in Y\in\uf U$, we have
 $\gamma^{\uf U}<t_g(\textit{id}^U)$ in $F/U$.
 
Set $a:=\langle s_c,c{+}1\rangle{+}1\in M$.  
For every $x\in X_a\in\uf U$ we have that $\gamma(x)=(v_x)_i$ is good, i.e.,
$$
Y \cap X_{a}\subseteq\big\{x\in M\mid M\models V_g(\langle x,\iota(x),\bit(b_{|x|},\iota(x))\rangle,\gamma(x))\big\}.
$$
It follows that the latter set is in $\uf U$ and thus $F/\uf U\models V_g(\langle \textit{id}^{\uf U},\iota^{\uf U},\bit(\beta^{\uf U},\iota^{\uf U})\rangle,\gamma^{\uf U})$.
\end{proof}

\section{Conclusion}
As explained in the introduction, the model theory of local witness models is of central importance in proof complexity. The only constructions  known prior to this work  are due to Kraj\'i\v{c}ek and give cofinal extensions of suitable cuts of a nonstandard $M\equiv\N$. Dually, this work constructs local witness models that are end extensions of $M$.

This is done using techniques from set theory and the model theory of $\PA$. Namely, we transfer some of the basic set theory of combinatorial ultrafilters from $\N$ to $M$. Our local witness model is a definable (sub)ultrapower based on a combinatorial ultrafilter and can be analyzed by techniques familiar from the model theory of $\PA$.

We find it encouraging that such techniques from neighboring fields can be meaningfully   imported to proof complexity. 

\paragraph{Questions}

As explained in the introduction, the main open question is how to construct local witness models satisfying 
the instances of sharply bounded collection in Remark~\ref{rem:coll}. 

Let us state explicitly some  questions concerning the model $N$ of Corollary~\ref{cor:lwm}. Note it satisfies  the implications of Remark~\ref{rem:impl} only for {\em standard} $R\in\NP$. We do not know whether one can strengthen this to all $R\in\NP(N)$, i.e., allowing parameters from $N$ in the formulas defining~$R$ (see~\cite[Definition~15.3.1]{krabuch}).
  We also do not know how to construct  $N$ as an end extension of $M$. 
Further, it would be interesting to relax the definition of feasible infinity to theories stronger than $\forall\PV$, e.g., add $\Sigma^b_1$-induction (see~\cite[Section 10.3]{krapcg}).

Concerning 
ultrafilter theory, Section~\ref{sec:uf} does not give a complete picture and, in particular, lacks counterexamples.
We include one for illustration:

\begin{ex} Assume $L$ is countable. 
Then there exists an unbounded,  definable ultrafilter in $\D(M)$ that is not a d-Q-point. \end{ex}

\begin{proof} This is a modification of a construction of Blass~\cite[p.162]{blass2}. Let $U$ be an unbounded  definable ultrafilter (Theorem~\ref{thm:exist} and Lemma~\ref{lem:unidef}). One straightforwardly checks that 
$$
 V\ :=\ \big\{  X\in\D(M)\mid M\models \textup{d}_{U}  x\ \textup{d}_{U} y\ ( \langle x,y\rangle\in X)    \big\}$$
%
%
is an unbounded, definable ultrafilter. We show $V$ is not a d-Q-point. Note $X:=\{\langle a,b\rangle\mid a<b\}\in V$ because $\{b\in M\mid a<b\}\in U$ for all $a\in M$ (as $U$ is unbounded), so $\textup{d}_{U}y\ x<y$ defines $M\in U$, so $M\models\textup{d}_{U}x\ \textup{d}_{U}y\ x<y$. The function $\langle x,y\rangle\mapsto y$ is bounded-to-one 
on $X$. Let $f\in\F(M)$  extend it by the identity on $M\setminus X$. Then $f$ is bounded-to-one. 

Assume it is 
injective on some $X'\in V$; we can assume $X'\subseteq X$.
By definition of $V$, we have 
$\{a\in M\mid M\models \textup{d}_Uy(\langle a,y\rangle\in X')\}\in U$. 
Let $a,a'$ be distinct elements of this set ($U$ is unbounded), so $Y:=\{b\in M\mid\langle a,b\rangle\in X' \}\in U$ and $ Y':=\{b\in M\mid\langle a',b\rangle\in X' \}\in U$. For $b\in Y\cap Y'\in U$ 
we have $\langle a,b\rangle,\langle a',b\rangle\in X'$. By $X'\subseteq X$, we have
 $f(\langle a,b\rangle)=f(\langle a',b\rangle)=b$, so $f$ is not injective on $X'$, a contradiction.
%
\end{proof}

For example, we do not know whether d-Ramsey ultrafilters can be undefinable.

\begin{rem} 
Unbounded, d-Ramsey ultrafilters $U$ in $\D(M)$ are {\em almost definable}: for every $L(M)$-formula $\varphi(x,y)$ there exists $X\in U$ such that  $\{a\in X\mid \varphi(a,M)\in U\}\in \D(M)$.
\end{rem}
\begin{proof} Given $\varphi(x,y)$, choose $X\in U$ according to the d-Ramsey property. It suffices to observe for every $a \in X$: 
$$ \varphi(a,M)\in U\ \Longleftrightarrow\ M\models \exists y\in X(a<y\wedge\varphi(a,y)). $$ Indeed, $\Rightarrow$ holds because  $\varphi(a,M)\cap X\in U$ is unbounded; $\Leftarrow$: since $a\in X$ either $X\setminus[a+1]\subseteq\varphi(a,M)$ or  $X\setminus[a+1]\subseteq\neg \varphi(a,M)$; since $U$ is unbounded, $X\setminus[a{+}1]\in U$; hence, $\varphi(a,M)\notin U $ implies   $X\setminus[a{+}1]\subseteq\neg \varphi(a,M)$, i.e., $M\models\forall y\in X(a<y\to \neg\varphi(x,y))$.\end{proof}

This notion seems relevant: the reader  easily checks that  $4\Rightarrow 1$  in Theorem~\ref{thm:ramsey} holds for unbounded, almost definable $U$ with the pfip.

\section*{Acknowledgements} We thank Diego Mej\'ia for discussions about ultrafilter theory and Jan Kraj\'i\v{c}ek for comments on an early draft of this paper.

\end{document}